\newtheorem{theorem}{Theorem}[section]
\newtheorem{lemma}[theorem]{Lemma}
\theoremstyle{definition}
\newtheorem{definition}[theorem]{Definition}
\newtheorem{example}[theorem]{Example}
\newtheorem{corollary}[theorem]{Corollary}
\theoremstyle{remark}
\numberwithin{equation}{section}
\begin{document}

\title[On a generalization of the Jordan canonical form theorem]{On
a generalization of the Jordan canonical form theorem on separable
Hilbert spaces}

\author{Rui Shi}


\curraddr{Department of Mathematics, Hebei Normal University,
Shijiazhuang, Hebei 050016, China} \email{littlestoneshg@gmail.com}
\thanks{The author was supported in part by NSFC Grant \#10731020.}

\subjclass[2000]{Primary 47A65, 47A67; Secondary 47A15, 47C15}



\keywords{Strongly irreducible operator, masa, direct integral}

\begin{abstract}
We prove a generalization of the Jordan canonical form theorem for a
class of bounded linear operators on complex separable Hilbert
spaces.
\end{abstract}

\maketitle

\section{Introduction}

Throughout this paper, all Hilbert spaces discussed are
\textit{complex and separable} while all operators are bounded and
linear on the Hilbert spaces. Let $\mathscr{H}$ be a Hilbert space
and let $\mathscr{L}(\mathscr{H})$ be the set of bounded linear
operators on $\mathscr{H}$. An \textit{idempotent} $P$ is an
operator such that $P^{2}=P$. A \textit{projection} $Q$ is an
idempotent such that $Q=Q^{*}$. Unless otherwise stated, the term
\textit{algebra} will always refer to a unital subalgebra of
$\mathscr{L}(\mathscr{H})$ which is closed in the strong operator
topology. An operator $A$ on $\mathscr{H}$ is said to be
{\textit{irreducible}} if its commutant $\{A\}'\equiv
\{B\in\mathscr{L}(\mathscr{H}):AB=BA\}$ contains no projections
other than $0$ and $I$, introduced by P. Halmos \cite{Halmos}. (The
separability assumption is necessary because on a non-separable
Hilbert space every operator is reducible.) An operator $A$ on
$\mathscr{H}$ is said to be {\textit{strongly irreducible}} if
$XAX^{-1}$ is irreducible for every invertible operator $X$ in
$\mathscr{L}(\mathscr{H})$, introduced by F. Gilfeather
\cite{Gilfeather}. Equivalently, an operator $A$ is strongly
irreducible if and only if $\{A\}^{\prime}$ contains no idempotents
other than $0$ and $I$.

In \cite{Jiang Z}, strongly irreducible operators are considered as
analogues of Jordan blocks on complex separable Hilbert spaces. In
the past two decades, D. A. Herrero, Yasuo Watatani, Zejian Jiang,
C. K. Fong, Chunlan Jiang, Zongyao Wang, Peiyuan Wu, Youqing Ji,
Junsheng Fang and many other mathematicians did a lot of work around
this class of operators \cite{Jiang_1, Jiang_2, Jiang_3, Jiang_4,
Jiang_5, Jiang_6, Watatani_1, Watatani_2, Watatani_3}.

On a finite dimensional Hilbert space $\mathscr{K}$, the Jordan
canonical form theorem shows that every operator can be uniquely
written as a (Banach) direct sum of Jordan blocks up to similarity.
This means that for an operator $B$ on $\mathscr{K}$, there is a
bounded maximal abelian set of idempotents $\mathscr{Q}$ in
$\{B\}^{\prime}$ and $\mathscr{Q}$ is unique up to similarity in
$\{B\}^{\prime}$. However, to represent certain operators on
$\mathscr{H}$, direct sums of Jordan blocks need to be replaced by
direct integrals of strongly irreducible operators with regular
Borel measures. In \cite{Shi}, we proved that an operator $A$ on
$\mathscr{H}$ is similar to a direct integral of strongly
irreducible operators if and only if its commutant $\{A\}^{\prime}$
contains a bounded maximal abelian set of idempotents. Related
concepts about direct integrals can be found in
\cite{Azoff_2,Conway}.

For an operator $A$ on $\mathscr{H}$, A direct integral
decomposition of $A$ is said to be a \textit{strongly irreducible
decomposition (S. I. D.)}  of $A$ if the integrand is strongly
irreducible almost everywhere on the domain of integration. An S. I.
D. of $A$ is said to be \textit{unique up to similarity} if for
bounded maximal abelian sets of idempotents $\mathscr{P}$ and
$\mathscr{Q}$ in $\{A\}^{\prime}$, there is an invertible operator
$X$ in $\{A\}^{\prime}$ such that $X\mathscr{P}X^{-1}=\mathscr{Q}$.

In this paper, we study when $A$ has unique S. I. D. up to
similarity, for $A$ similar to a direct integral of strongly
irreducible operators.

\section{Upper triangular representation and main theorems}

If an operator $A$ in $\mathscr{L}(\mathscr{H})$ is similar to a
direct integral of strongly irreducible operators, then there is an
invertible operator $X$ in $\mathscr{L}(\mathscr{H})$ such that
$XAX^{-1}$ has an S. I. D. in the form
$$XAX^{-1}=
(\bigoplus\limits^{\infty}_{n=1}\int\limits_{\Lambda_n}
(XAX^{-1})(\lambda)d\mu(\lambda))
\bigoplus\int\limits_{\Lambda_{\infty}}
(XAX^{-1})(\lambda)d\mu(\lambda). \eqno (1)$$ Here $\mu$ is a
regular Borel measure. Write $\Lambda$ for
$\bigcup^{n=\infty}_{n=1}\Lambda_{n}$. The sets $\Lambda_{\infty}$
and $\Lambda_{n}$ for $n$ in $\mathds{N}$ are bounded Borel and
pairwise disjoint. For $n$ in $\mathds{N}$ and almost every
$\lambda$ in $\Lambda_{n}$, the dimension of the fibre space
$\mathscr{H}_{\lambda}$ is $n$. For almost every $\lambda$ in
$\Lambda_{\infty}$, the dimension of the fibre space
$\mathscr{H}_{\lambda}$ is $\infty$. (For fibre space, see
\cite{Abrahamse}, \S2.) Some $\Lambda_{n}$s and $\Lambda_{\infty}$
may be of measure zero. The partitioned measure space corresponding
to the S. I. D. of $XAX^{-1}$ is denoted by $\{\Lambda, \mu,
\{\Lambda_{n}\}^{n=\infty}_{n=1}\}$.

For a nonzero normal operator $N$ on $\mathscr{H}$, the tensor
product $I\otimes N$ does not have unique S. I. D. up to similarity,
where $I$ is the identity operator on $\mathscr{H}$ and
${\textrm{dim}}\mathscr{H}=\infty$. If $A$ is similar to a normal
operator $N$, then the S. I. D. of $A$ is unique up to similarity if
and only if the multiplicity function $m_{_N}$ for $N$ is finite
a.~e.\ on $\sigma(N)$. Based on this, we can construct a non-normal
operator which does not have unique S. I. D. up to similarity, if
$\mu(\Lambda_{\infty})\neq 0$ in (1). If $\mu(\Lambda_{\infty})=0$
then the S. I. D. of $XAX^{-1}$ is of the form
$$XAX^{-1}= \bigoplus\limits^{\infty}_{n=1}\int\limits_{\Lambda_n}
(XAX^{-1})(\lambda) d\mu(\lambda). \eqno (2)$$ By (\cite{Azoff_1},
Corollary 2), there is a unitary operator $U$ such that the equation
$$U(XAX^{-1})U^{*}(\lambda)=U(\lambda)(XAX^{-1})(\lambda)U^{*}(\lambda)$$
holds a.~e.\ on $\Lambda$ and $U(XAX^{-1})U^{*}(\lambda)$ is upper
triangular in $M_{n}(\mathds{C})$ for $\lambda$ a.~e.\ in
$\Lambda_{n}$. Write $\mu_{n}$ for $\mu|_{\Lambda_{n}}, 1\leq
n<\infty$. Without loss of generality, we assume that
$$XAX^{-1}=
\bigoplus\limits^{\infty}_{n=1}\int\limits_{\Lambda_n}
\left(\begin{array}{cccccc}
M_{\phi_{n}}&M_{\phi^{n}_{12}}&M_{\phi^{n}_{13}}&\cdots&M_{\phi^{n}_{1n}}\\
0&M_{\phi_{n}}&M_{\phi^{n}_{23}}&\cdots&M_{\phi^{n}_{2n}}\\
0&0&M_{\phi_{n}}&\cdots&M_{\phi^{n}_{3n}}\\
\vdots&\vdots&\vdots&\ddots&\vdots\\
0&0&0&\cdots&M_{\phi_{n}}\\
\end{array}\right)_{n\times n}(\lambda) d\mu(\lambda),\eqno (3)$$
where $\phi_{n},\phi^{n}_{ij}\in L^{\infty}(\mu_{n})$,
$M_{\phi_{n}}$ and $M_{\phi^{n}_{ij}}$ are multiplication operators.
The scalar-valued spectral measure for $M_{\phi_{n}}$ is
$\nu_{n}\equiv\mu_{n}\circ\phi^{-1}_{n}$. Let the set
$\{\Gamma_{nm}\}^{m=\infty}_{m=1}$ be a Borel partition of
$\sigma(M_{\phi_{n}})$ corresponding to the multiplicity function
$m_{{{\phi_{n}}}}$ for $M_{\phi_{n}}$ on $\sigma(M_{\phi_{n}})$,
where $m_{{{\phi_{n}}}}(\lambda)=m, \forall\lambda\in\Gamma_{nm}$.
Write $\nu_{nm}$ for $\nu_{n}|_{\Gamma_{nm}},1\leq m\leq\infty$.  We
find that the functions $m_{{{\phi_{n}}}}$ play a significant role
on studying the uniqueness of S. I. D. of $A$ up to similarity.
(Note that $\phi^{n}_{ij}$ does not stand for $\phi_{ij}$ to the
power of $n$ here. The symbol $n$ is only a superscript.)

\begin{definition}
The function $m_{{{\phi_{n}}}}$ on $\sigma(M_{\phi_{n}})$ is said to
be \textit{the characteristic function of S. I. D.} of $XAX^{-1}$
corresponding to $\Lambda_{n}$.
\end{definition}

In the rest of this paper, we write the partitioned measure space
corresponding to the S. I. D. of $XAX^{-1}$ as $\{\Lambda, \mu,
\{\Lambda_{n},m_{{{\phi_{n}}}}\}^{\infty}_{n=1}\}$.

The purpose of this paper is to give a sufficient condition such
that the S. I. D. of $A$ in (2) is unique up to similarity.
Precisely we prove the following theorems.

\begin{theorem}
For a fixed $n$, assume that $T\in\mathscr{L}(\mathscr{H})$ is a
direct integral of upper triangular strongly irreducible operators
and the corresponding measure space is $\{\Lambda_{n}, \mu_{n},
\{\Lambda_{n},m_{{{\phi_{n}}}}\}\}$ as in (3). If there is a unitary
operator $U$ such that both
$$UM_{\phi_{n}}U^{*}=M^{(\infty)}_{z_{\infty}}\oplus
M^{}_{z_{1}}\oplus M^{(2)}_{z_{2}}\oplus\cdots$$ and
$$UM_{\phi^{n}_{ij}}U^{*}=M^{(\infty)}_{\psi^{n,ij}_{\infty}}\oplus
M^{}_{\psi^{n,ij}_{1}}\oplus M^{(2)}_{\psi^{n,ij}_{2}}\oplus\cdots$$
hold, where $\psi^{n,ij}_{m}$ and $z_{m}$ are in
$L^{\infty}(\nu_{nm})$, and $z_{m}(t)=t, \forall t\in\Gamma_{nm}$.
then the following statements are equivalent:
\begin{enumerate}
\item The bounded $\nu_{n}$-measurable multiplicity function
$m_{{{\phi_{n}}}}$ is simple on $\sigma(M_{\phi_{n}})$.
\item The S. I. D. of $T$ is unique up to similarity.
\item The $K_{0}$ group of $\{T\}^{\prime}$ is of the form
$$K_{0}(\{T\}')\cong\{f:\sigma(M_{\phi_{n}})\rightarrow\mathds{Z}|f\
{\textrm{is\ bounded Borel.}}\}.$$
\end{enumerate}
\end{theorem}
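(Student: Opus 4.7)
The plan is to establish the three equivalences by proving (1) $\Leftrightarrow$ (2) and (1) $\Leftrightarrow$ (3). The hypothesis on $U$ lets me write, after a unitary conjugation,
$$UTU^{*} \cong T_{\infty}^{(\infty)} \oplus T_{1} \oplus T_{2}^{(2)} \oplus T_{3}^{(3)} \oplus \cdots,$$
where each $T_{m}$ is an $n \times n$ upper triangular direct integral of strongly irreducible operators over $\Gamma_{nm}$ whose diagonal is $M_{z_{m}}$ with multiplicity one. Because the $\Gamma_{nm}$ are pairwise disjoint, every bounded maximal abelian set of idempotents in $\{T\}'$ decomposes accordingly, and similarity of two such sets can be studied summand by summand.

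For (1) $\Rightarrow$ (2), simplicity of $m_{\phi_{n}}$ means every summand with $m \geq 2$ and the infinite-multiplicity summand vanishes, so $T$ is similar to $T_{1}$. Then $\{T_{1}\}'$ consists of $n \times n$ upper triangular matrices of multiplication operators whose entries satisfy intertwining relations forced by the upper triangular fiber structure. I would show that any bounded maximal abelian set of idempotents in this commutant is obtained, via conjugation by a bounded invertible multiplication operator, from the canonical diagonal set of idempotents coming from a Borel partition of $\sigma(M_{\phi_{n}})$. This is the measure-theoretic analogue of the finite-dimensional uniqueness of MASIs in the commutant of a single Jordan block, distributed fiberwise over $\sigma(M_{\phi_{n}})$.

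For (2) $\Rightarrow$ (1), I argue by contrapositive. If $m_{\phi_{n}}(\lambda) = m \geq 2$ on a Borel set $\Gamma \subset \Gamma_{nm}$ of positive $\nu_{n}$-measure (the case $m = \infty$ is handled similarly), the corresponding summand is unitarily equivalent to $S \otimes I_{m}$ for a strongly irreducible direct integral $S$ over $\Gamma$. The commutant contains the tensor factor $\{S\}' \otimes M_{m}(\mathds{C})$, and inside this factor I exhibit two bounded maximal abelian sets of idempotents that cannot be conjugated by any bounded element of the full commutant $\{T\}'$. The construction mirrors the normal-operator obstruction highlighted in the introduction, where non-constant spectral multiplicity prevents uniqueness of the S.~I.~D.

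For (1) $\Leftrightarrow$ (3), I analyze the ideal structure of $\{T\}'$. The strictly upper triangular part $J$ of $\{T\}'$ is a two-sided ideal nilpotent of order at most $n$, so every idempotent in $\{T\}'$ lifts uniquely from $\{T\}'/J$ and the inclusion induces an isomorphism on $K_{0}$. Under simplicity, the quotient $\{T\}'/J$ is isomorphic to the abelian algebra of bounded Borel functions on $\sigma(M_{\phi_{n}})$, whose $K_{0}$ group is exactly the group of bounded Borel $\mathds{Z}$-valued functions appearing in (3). Conversely, if $m_{\phi_{n}}$ is not simple, the quotient acquires factors of $M_{m}(\mathds{C})$ from the higher-multiplicity summands, each contributing an extra $\mathds{Z}$ to $K_{0}$ and ruling out the stated isomorphism.

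The main obstacle I anticipate is (2) $\Rightarrow$ (1): I must control measurability and uniform boundedness of the candidate conjugating operator across the direct integral and then rule out \emph{any} bounded element of the full commutant as a conjugator, rather than merely a fiberwise choice. By contrast, the $K_{0}$ calculation is largely bookkeeping once the nilpotence of $J$ and the identification of $\{T\}'/J$ under simplicity are in place.
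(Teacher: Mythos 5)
There is a genuine gap, and it originates in a misreading of condition (1). In this paper, ``the bounded multiplicity function $m_{\phi_{n}}$ is simple'' means that $m_{\phi_{n}}$ is a bounded simple function, i.e.\ essentially finite-valued (the set $\Gamma_{n\infty}$ is null and only finitely many $\Gamma_{nm}$ carry mass); it does \emph{not} mean $m_{\phi_{n}}\equiv 1$. The introduction states the model case explicitly (for $A$ similar to a normal $N$, the S.~I.~D.\ is unique iff $m_{N}$ is finite a.e.), and the paper's Theorem 3.3 proves that $T^{(m)}$ has a unique S.~I.~D.\ for \emph{every} finite $m$. Your contrapositive for $(2)\Rightarrow(1)$ proposes to exhibit, for finite $m\ge 2$, two bounded maximal abelian sets of idempotents in $\{S\}'\otimes M_{m}(\mathds{C})$ that are not conjugate in $\{T\}'$; this is attempting to prove something false, since Lemmas 3.4--3.6 show that every bounded maximal abelian set of idempotents in $M_{m}(\{T\}')$ is conjugate to $\mathscr{P}\oplus\cdots\oplus\mathscr{P}$ by an invertible element of $M_{m}(\{T\}')$. (Were your reading of (1) correct, the theorem itself would be inconsistent with Theorem 3.3.) The only genuine obstruction is the infinite-multiplicity part, which your sketch dismisses as ``handled similarly''; in fact it is the only case that needs handling, and it behaves qualitatively differently (the paper treats it via Lemmas 3.9--3.10 and the normal-operator obstruction).

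The same misreading breaks the $K_{0}$ direction. A summand of finite multiplicity $m\ge 2$ replaces the fiber algebra by an $M_{m}(\mathds{C})$-amplification, and $K_{0}(M_{m}(A))\cong K_{0}(A)$, so it contributes no ``extra $\mathds{Z}$'' and does not violate (3); what destroys (3) is infinite multiplicity, where the $K_{0}$ group of the fiber commutant collapses to $0$. Your nilpotent-ideal computation of $K_{0}(\{T\}'/J)$ in the multiplicity-one case is sound and matches Corollary 3.7 and Example 3.8, but the hard positive content of $(1)\Rightarrow(2)$ --- conjugating an arbitrary bounded maximal abelian set of idempotents in $M_{m}(\{T\}')$ for finite $m\ge 2$ into the canonical one, with measurable and uniformly bounded fiberwise choices obtained from Azoff's Borel cross-section results --- is exactly what you skip by reducing to the multiplicity-one summand $T_{1}$. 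To repair the proposal: prove uniqueness for each finite amplification $T_{m}^{(m)}$ (the paper's Lemmas 3.4--3.6), assemble over the finitely many non-null $\Gamma_{nm}$, and concentrate the negative directions entirely on $\Gamma_{n\infty}$.
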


The condition in this theorem is significant and reasonable. We show
this in the proofs in \S3. The following theorem is a generalized
version of the above theorem.

\begin{theorem}
Assume that $T\in\mathscr{L}(\mathscr{H})$ is a direct integral of
upper triangular strongly irreducible operators and the
corresponding partitioned measure space is $\{\Lambda, \mu,
\{\Lambda_{n},m_{{{\phi_{n}}}}\}^{\infty}_{n=1}\}$ as in (3). The
set $\Lambda_{n}$ is of $\mu$-measure $0$ for all but finitely many
$n$ in $\mathds{N}$. If the spectral measures for
$\{M_{\phi_{n}}\}^{\infty}_{n=1}$ are mutually singular and there is
a unitary operator $U\in\{T\}^{\prime}$ satisfying the condition in
Theorem 2.2 w.~r.~t.\ $\phi_{n}$ and $\phi^{n}_{ij}\ (i>j)$ on every
$\Lambda_{n}$, then the following statements are equivalent:
\begin{enumerate}
\item The bounded $\nu_{n}$-measurable multiplicity function
$m_{{{\phi_{n}}}}$ is simple on $\sigma(M_{\phi_{n}})$ for $n$ in
$\mathds{N}$.
\item The S. I. D. of $T$ is unique up to similarity.
\item The $K_{0}$ group of $\{T\}^{\prime}$ is of the
form
$$K_{0}(\{T\}')\cong\{f:\bigcup^{\infty}_{n=1}\sigma(M_{\phi_{n}})\rightarrow\mathds{Z}|f\
{\textrm{is\ bounded Borel.}}\}.$$
\end{enumerate}
\end{theorem}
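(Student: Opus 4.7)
The plan is to reduce the theorem to a block-by-block application of Theorem 2.2. Let $T_n$ denote the integrand of (3) restricted to $\Lambda_n$, so that the measure hypothesis gives $T=\bigoplus_{n\in F} T_n$ for some finite $F\subset\mathds{N}$. Theorem 2.2 applies to each $T_n$ individually. The task is to glue these local equivalences into a global one, and the key input is the mutual singularity of the scalar spectral measures of $\{M_{\phi_n}\}_{n=1}^{\infty}$.

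The crucial technical step is to establish that $\{T\}^{\prime}$ decomposes as $\bigoplus_{n\in F}\{T_n\}^{\prime}$. Writing an arbitrary $S\in\{T\}^{\prime}$ as a block operator $S=(S_{mn})$ where $S_{mn}$ maps the $T_n$-space into the $T_m$-space and satisfies $T_m S_{mn}=S_{mn}T_n$, I would peel off the upper triangular structure one diagonal at a time. The $(1,1)$-block equation reads $M_{\phi_m}S_{mn}^{(1,1)}=S_{mn}^{(1,1)}M_{\phi_n}$, and since $M_{\phi_m}$ and $M_{\phi_n}$ are normal with mutually singular scalar spectral measures, the standard intertwining consequence of the spectral theorem forces $S_{mn}^{(1,1)}=0$. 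Iterating along successively higher diagonals, each block equation takes the form $M_{\phi_m}X=XM_{\phi_n}+(\textrm{terms already shown to vanish})$, so the same mutual singularity argument applies, and every entry of $S_{mn}$ vanishes for $m\neq n$. Once the commutant splits, the unitary $U\in\{T\}^{\prime}$ from the hypothesis decomposes as $U=\bigoplus_{n\in F}U_n$ with each $U_n$ verifying the hypothesis of Theorem 2.2 on $T_n$.

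With these pieces assembled, the three-way equivalence follows. For (1)$\Leftrightarrow$(2), a bounded maximal abelian set of idempotents in $\{T\}^{\prime}$ corresponds under the direct sum to a choice of such sets in each $\{T_n\}^{\prime}$, and similarity within $\{T\}^{\prime}$ reduces to simultaneous similarities on each summand, so applying Theorem 2.2(1)$\Leftrightarrow$(2) block-wise settles the global statement. For (2)$\Leftrightarrow$(3), finite direct sums commute with $K_0$, giving $K_0(\{T\}^{\prime})=\bigoplus_{n\in F}K_0(\{T_n\}^{\prime})$; under simplicity each summand is identified by Theorem 2.2(3) with bounded Borel $\mathds{Z}$-valued functions on $\sigma(M_{\phi_n})$, and mutual singularity lets me identify the direct sum with bounded Borel $\mathds{Z}$-valued functions on $\bigcup_{n=1}^{\infty}\sigma(M_{\phi_n})$, since a.~e.\ point of the union sits in exactly one piece.

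The main obstacle will be the commutant decomposition in the second paragraph. The upper triangular couplings within each $T_n$ prevent the equation $T_mS_{mn}=S_{mn}T_n$ from collapsing to a single normal intertwiner identity, so the vanishing of $S_{mn}$ for $m\neq n$ must be propagated carefully through the entire triangular structure, with the mutual singularity of the spectral measures invoked at every diagonal. A secondary point requiring care is the identification of the $K_0$ group with bounded Borel functions on the union when the spectra $\sigma(M_{\phi_n})$ may overlap as sets even though their spectral measures are mutually singular; the mutual singularity is exactly what lets one partition the union (modulo null sets for every relevant measure) into pieces on which each function component is uniquely determined.
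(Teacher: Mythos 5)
Your proposal is correct and follows essentially the same route as the paper: decompose $T$ as a finite direct sum of the blocks $T_n$, use the mutual singularity of the spectral measures of the $M_{\phi_n}$ to conclude $\{T\}^{\prime}=\bigoplus_n\{T_n\}^{\prime}$, and then apply Theorem 2.2 blockwise. In fact you supply more detail than the paper does on the commutant splitting (propagating the vanishing of the off-diagonal intertwiners through the triangular structure) and on assembling the $K_0$ groups.
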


The rest of this paper is organized as follows. In Section 3, first
we prove a special case of Theorem 3.3 in Lemma 3.2 and then we
prove Theorem 3.3 in three lemmas. Corollary 3.7 is to characterize
the $K_{0}$ group of the commutant of the operator $T$ in Theorem
3.3. In Example 3.8, we construct an operator and compute the
corresponding $K_{0}$ group. Finally, we prove Theorem 2.2 and
Theorem 2.3. The operator $T$ in Theorem 3.3 indicates why we add a
condition about the unitary operator in Theorem 2.2.

\section{Proofs}

The following lemma reveals an important property which is applied
in other lemmas in this paper.

\begin{lemma}
Assume that $\phi,\ \phi_{ij}\in L^{\infty}(\mu_{n})$, where $1\leq
i,\ j\leq n$ and $\Lambda_{n}$ is as in (1). Multiplication
operators $M_{\phi}$ and $M_{\phi_{ij}}$ are on $L^{2}(\mu_{n})$.
Then the following upper triangular form
$$T=\left(\begin{array}{cccccc}
M_{\phi}&M_{\phi_{12}}&M_{\phi_{13}}&\cdots&M_{\phi_{1n}}\\
0&M_{\phi}&M_{\phi_{23}}&\cdots&M_{\phi_{2n}}\\
0&0&M_{\phi}&\cdots&M_{\phi_{3n}}\\
\vdots&\vdots&\vdots&\ddots&\vdots\\
0&0&0&\cdots&M_{\phi}\\
\end{array}\right)_{n\times n} \eqno (4)$$ is strongly irreducible
a.~e.\ on $\Lambda_{n}$ if and only if $\phi_{i,i+1}(\lambda)\neq 0$
holds a.~e.\ on $\Lambda_{n}$ for $i=1, 2, \ldots, n-1$. The
underlying Hilbert space of $T$ is denoted by $\mathscr{H}$.
\end{lemma}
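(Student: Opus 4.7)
My plan is to reduce the statement to a pointwise question about finite matrices by disintegrating $T$ as a direct integral. Write $T = \int_{\Lambda_n} T(\lambda)\,d\mu_n(\lambda)$, where for $\mu_n$-a.e.\ $\lambda \in \Lambda_n$ the fibre $T(\lambda)$ is the $n\times n$ upper triangular complex matrix with $\phi(\lambda)$ on the diagonal and $\phi_{ij}(\lambda)$ in position $(i,j)$ for $i<j$. The assertion ``$T$ is strongly irreducible a.e.\ on $\Lambda_n$'' is the assertion that $T(\lambda) \in M_n(\mathbb{C})$ is strongly irreducible for $\mu_n$-a.e.\ $\lambda$, so the lemma becomes a pointwise statement about $T(\lambda)$ plus some routine measure-theoretic bookkeeping.

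\textbf{Pointwise step.} Here I would invoke the standard fact that a matrix in $M_n(\mathbb{C})$ is strongly irreducible if and only if it is similar to a single Jordan block (its commutant is then $\mathbb{C}[J]$, which has no non-trivial idempotents; conversely, any matrix with two or more Jordan blocks admits a non-trivial idempotent projecting onto one of them). Since $T(\lambda)$ already has the single eigenvalue $\phi(\lambda)$, it is similar to $J_n(\phi(\lambda))$ if and only if its nilpotent part $N(\lambda) := T(\lambda) - \phi(\lambda) I$ has nilpotency index exactly $n$, equivalently $N(\lambda)^{n-1} \neq 0$.

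\textbf{The key computation.} A direct induction on powers of a strictly upper triangular matrix shows
\[
\bigl(N(\lambda)^{n-1}\bigr)_{1n} \;=\; \sum_{1=k_0<k_1<\cdots<k_{n-1}=n} \prod_{r=1}^{n-1} \phi_{k_{r-1},k_r}(\lambda) \;=\; \prod_{i=1}^{n-1} \phi_{i,i+1}(\lambda),
\]
since the only strictly increasing sequence from $1$ to $n$ of length $n-1$ is $1,2,\dots,n$. All other entries of $N(\lambda)^{n-1}$ vanish because any path of length $n-1$ in the $(i,j)$-graph with $i<j$ must increase by $1$ at each step. Hence $N(\lambda)^{n-1} \neq 0$ if and only if $\prod_{i=1}^{n-1}\phi_{i,i+1}(\lambda) \neq 0$, i.e., $\phi_{i,i+1}(\lambda)\neq 0$ for every $i=1,\dots,n-1$.

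\textbf{Measure-theoretic wrap-up.} Writing $E_i := \{\lambda \in \Lambda_n : \phi_{i,i+1}(\lambda) = 0\}$, the set of $\lambda$ at which $T(\lambda)$ fails to be strongly irreducible is $\bigcup_{i=1}^{n-1} E_i$. As a finite union of measurable sets, it has $\mu_n$-measure zero if and only if each $E_i$ has $\mu_n$-measure zero, which gives the stated equivalence. I do not expect a serious obstacle: the whole argument is a finite-dimensional Jordan-form calculation together with the trivial observation about null sets; the only thing worth being careful about is the identification of strong irreducibility in $M_n(\mathbb{C})$ with similarity to a single Jordan block, which is where the ``no non-trivial idempotents in the commutant'' hypothesis is used.
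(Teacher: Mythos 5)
Your proposal is correct and follows essentially the same route as the paper: both arguments reduce the lemma to the pointwise statement that $T(\lambda)\in M_n(\mathds{C})$ is strongly irreducible exactly when it is similar to a single Jordan block with eigenvalue $\phi(\lambda)$, and then translate this into nonvanishing of the superdiagonal entries. The only difference is in execution --- the paper solves the intertwining equation $T(\lambda)X=XJ_{\sigma(T(\lambda))}$ and computes the commutant of $T(\lambda)$ explicitly for the converse, whereas you detect the single-block condition via the nilpotency index, using $\bigl(N(\lambda)^{n-1}\bigr)_{1n}=\prod_{i=1}^{n-1}\phi_{i,i+1}(\lambda)$; both computations are valid and the measure-theoretic wrap-up is the same.
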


\begin{proof}
For $\lambda$ in $\Lambda_{n}$, if $T(\lambda)$ is strongly
irreducible in $M_{n}(\mathds{C})$, then there is an invertible
operator $X$ in $M_{n}(\mathds{C})$ such that
$T(\lambda)X=XJ_{\sigma(T(\lambda))}$, where
$J_{\sigma(T(\lambda))}$ is a Jordan block in $M_{n}(\mathds{C})$
with spectrum $\sigma(T(\lambda))$. The equation
$T(\lambda)X=XJ_{\sigma(T(\lambda))}$ becomes
$$\left(\begin{array}{ccccc}
\alpha&\alpha_{12}&\alpha_{13}&\cdots&\alpha_{1n}\\
0&\alpha&\alpha_{23}&\cdots&\alpha_{2n}\\
0&0&\alpha&\cdots&\alpha_{3n}\\
\vdots&\vdots&\vdots&\ddots&\vdots\\
0&0&0&\cdots&\alpha\\
\end{array}\right)
\left(\begin{array}{ccccc}
x_{11}&x_{12}&x_{13}&\cdots&x_{1n}\\
x_{21}&x_{22}&x_{23}&\cdots&x_{2n}\\
x_{31}&x_{32}&x_{33}&\cdots&x_{3n}\\
\vdots&\vdots&\vdots&\ddots&\vdots\\
x_{n1}&x_{n2}&x_{n3}&\cdots&x_{nn}\\
\end{array}\right)$$
$$=\left(\begin{array}{ccccc}
x_{11}&x_{12}&x_{13}&\cdots&x_{1n}\\
x_{21}&x_{22}&x_{23}&\cdots&x_{2n}\\
x_{31}&x_{32}&x_{33}&\cdots&x_{3n}\\
\vdots&\vdots&\vdots&\ddots&\vdots\\
x_{n1}&x_{n2}&x_{n3}&\cdots&x_{nn}\\
\end{array}\right)
\left(\begin{array}{ccccc}
\alpha&1&0&\cdots&0\\
0&\alpha&1&\cdots&0\\
0&0&\alpha&\cdots&0\\
\vdots&\vdots&\vdots&\ddots&\vdots\\
0&0&0&\cdots&\alpha\\
\end{array}\right),$$ where  $\alpha=\sigma(T{(\lambda)})$. This
equation implies that  $x_{ij}=0$ for $i>j$ and
$\alpha_{i-1,i}x_{ii}=x_{i-1,i-1}$ for $i=2, 3, \ldots, n$.
Therefore the invertibility of $X$ implies that $\alpha_{i-1,i}\neq
0$ holds for $i=2, 3, \ldots, n$.

On the other hand, if $\alpha_{i-1,i}\neq 0$ holds for $i=2, 3,
\ldots, n$, then every operator $X$ in $M_{n}(\mathds{C})$
satisfying $T(\lambda)X=XT(\lambda)$ can be expressed in the form
$$X=\left(\begin{array}{ccccc}
x_{11}&x_{12}&x_{13}&\cdots&x_{1n}\\
0&x_{11}&x_{23}&\cdots&x_{2n}\\
0&0&x_{11}&\cdots&x_{3n}\\
\vdots&\vdots&\vdots&\ddots&\vdots\\
0&0&0&\cdots&x_{11}\\
\end{array}\right).$$ If $X$ is an idempotent, then it must be $I$ or
$0$. Thus $T(\lambda)$ is strongly irreducible.

\end{proof}

\begin{lemma}
Suppose that an operator $T$ is assumed as in (4),
$\phi_{i,i+1}(\lambda)\neq 0$ holds a.~e.\ on $\Lambda_{n}$ for
$i=1, 2, \ldots, n-1$, and $\phi$ is one to one a.~e.\ on
$\Lambda_{n}$. Then the S. I. D. of $T$ is unique up to similarity.
\end{lemma}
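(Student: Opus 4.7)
The plan is to show that the one-to-one hypothesis on $\phi$ forces the commutant $\{T\}^{\prime}$ to be so rigid that every idempotent in it is of the form $M_{\chi_{\Omega}}\otimes I_{n}$ for some Borel $\Omega\subseteq\Lambda_{n}$. Any two such idempotents automatically commute, so the full set of idempotents in $\{T\}^{\prime}$ is itself a bounded abelian set and is therefore the unique bounded maximal abelian set of idempotents in $\{T\}^{\prime}$. Consequently any two bounded maximal abelian sets of idempotents $\mathscr{P}$ and $\mathscr{Q}$ coincide, and the uniqueness of the S.\ I.\ D.\ up to similarity holds trivially with $X=I$.

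The structural analysis is the heart of the argument. Writing $Y\in\{T\}^{\prime}$ as an $n\times n$ block matrix $(Y_{ij})$ with entries in $\mathscr{L}(L^{2}(\mu_{n}))$ and equating the blocks of $TY$ and $YT$ yields
\[
[M_{\phi},Y_{ik}]=\sum_{j<k}Y_{ij}M_{\phi_{jk}}-\sum_{j>i}M_{\phi_{ij}}Y_{jk}.
\]
The key technical input I would establish is the following \emph{differentiation lemma}: since $\phi$ is one-to-one a.e., if $Z\in\mathscr{L}(L^{2}(\mu_{n}))$ satisfies $[M_{\phi},Z]=M_{b}$ for some $b\in L^{\infty}(\mu_{n})$, then $b=0$ and $Z$ commutes with $M_{\phi}$. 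This is proved by testing on the normalized indicators $f_{\varepsilon}=\mu_{n}(\Omega_{\varepsilon})^{-1/2}\chi_{\Omega_{\varepsilon}}$ of small preimages $\Omega_{\varepsilon}\subseteq\phi^{-1}(D_{\varepsilon}(\zeta))$: the bound $\|M_{\phi}f_{\varepsilon}-\zeta f_{\varepsilon}\|\leq\varepsilon$ gives $|\langle[M_{\phi},Z]f_{\varepsilon},f_{\varepsilon}\rangle|\leq 2\varepsilon\|Z\|\to 0$, while $\langle M_{b}f_{\varepsilon},f_{\varepsilon}\rangle\to b(\lambda_{0})$ at each Lebesgue point $\lambda_{0}$ by the Lebesgue differentiation theorem.

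With the lemma in hand I process the pairs $(i,k)$ in order of decreasing $i-k$, starting at the corner $(n,1)$. Inductively, the right-hand side of the displayed relation at level $(i,k)$ is already a sum of products of known multiplication operators, hence equals some $M_{b_{ik}}$; the differentiation lemma then forces $Y_{ik}$ to be a multiplication operator and $b_{ik}=0$. The hypothesis $\phi_{j,j+1}\ne 0$ a.e.\ allows one to read off from $b_{ik}=0$ the vanishing $Y_{ik}=0$ for $i>k$, and from the first super-diagonal relations the equalities $y_{11}=y_{22}=\cdots=y_{nn}$ among the diagonal symbols. Thus every $Y\in\{T\}^{\prime}$ is decomposable, $Y=\int^{\oplus}Y(\lambda)d\mu_{n}(\lambda)$, with $Y(\lambda)\in\{T(\lambda)\}^{\prime}$ upper triangular. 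For an idempotent $E\in\{T\}^{\prime}$, $E(\lambda)^{2}=E(\lambda)$ together with the strong irreducibility of $T(\lambda)$ from Lemma 3.1 forces $E(\lambda)\in\{0,I_{n}\}$ a.e., so $E=M_{\chi_{\Omega}}\otimes I_{n}$ as claimed, and the uniqueness follows from the first paragraph. The main obstacle is the combinatorial bookkeeping in the inductive structural step: one must order the pairs so that the right-hand side of the commutator equation has already been identified as a multiplication operator when the induction reaches $(i,k)$, and then extract the vanishing of below-diagonal symbols from the non-vanishing of $\phi_{i,i+1}$.
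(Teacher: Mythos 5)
Your argument is correct and lands on exactly the same structural fact as the paper's proof --- every element of $\{T\}^{\prime}$ is an upper triangular matrix of multiplication operators with constant diagonal (formula (5) in the paper), so every idempotent is $M_{\chi_{\Omega}}^{(n)}$ for a Borel $\Omega\subseteq\Lambda_{n}$ and the set of all idempotents in $\{T\}^{\prime}$ is already abelian, hence is the unique bounded maximal abelian set of idempotents --- but you reach that structure by a genuinely different route. The paper first proves $\{T\}^{\prime}\subseteq\{M_{\phi}^{(n)}\}^{\prime}$: for each Borel $\sigma\subseteq\sigma(M_{\phi})$ it splits $\Lambda_{n}$ into $\phi^{-1}(\sigma)$ and its complement and uses that the two restricted copies of $M_{\phi}$ have mutually singular scalar spectral measures (this is where injectivity of $\phi$ enters) to kill the off-diagonal intertwiners; each block of a commutant element then lies in the masa generated by the spectral projections of $M_{\phi}$, and (5) follows from the pointwise computation of Lemma 3.1. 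You instead extract the same information directly from the block commutator relations via your differentiation lemma and an induction along anti-diagonals. Two points in your version should be made explicit. First, the step in which ``$[M_{\phi},Y_{ik}]=0$ forces $Y_{ik}$ to be a multiplication operator'' is precisely the statement that $M_{\phi}$ generates a masa on $L^{2}(\mu_{n})$, which again rests on the injectivity of $\phi$; without it the commutant of $M_{\phi}$ is strictly larger than $L^{\infty}(\mu_{n})$ and your induction would not close, since the right-hand side at the next level would no longer be known to be a multiplication operator. (Injectivity is also what makes your averages converge to $b(\lambda_{0})$ rather than to a conditional expectation of $b$ given $\phi$.) Second, for a general regular Borel measure $\mu_{n}$ the convergence of the averages over $\phi^{-1}(D_{\varepsilon}(\zeta))$ needs the Besicovitch differentiation theorem rather than the classical Lebesgue one; alternatively, since $M_{b}=[M_{\phi},Z]$ commutes with $M_{\phi}$, the Kleinecke--Shirokov theorem makes $M_{b}$ quasinilpotent, hence zero by normality, with no measure theory at all. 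What your approach buys is a self-contained argument that displays exactly where each hypothesis is used ($\phi$ one-to-one for the masa/differentiation step, $\phi_{i,i+1}\neq 0$ for killing the subdiagonal symbols and equating the diagonal ones); the paper's proof is shorter because it delegates the first step to spectral multiplicity theory.
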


\begin{proof}
Let $E(\cdot)$ be the spectral measure of $M_{\phi}$. Thus
$E^{(n)}(\cdot)$ is the spectral measure of $M^{(n)}_{\phi}$.
Meanwhile, the spectral measures of $M_{\phi}$ and $M_{z}$ have the
same range. First we show $\{T\}^{\prime}\subseteq
\{M^{(n)}_{\phi}\}^{\prime}$. Equivalently, we need to prove that
for every Borel subset $\sigma$ of $\sigma(M_{\phi})$ and
$X\in\{T\}^{\prime}$, the projection $E^{(n)}(\sigma)$ reduces $X$.

If we write $\mu_{n1}$ for $\mu_{n}|_{\phi^{-1}(\sigma)}$ and
$\mu_{n2}$ for $\mu_{n}|_{\Lambda_{n}\backslash\phi^{-1}(\sigma)}$,
then we have
$$[\textrm{ran}(E^{(n)}(\sigma))]=(L^{2}(\mu_{n1}))^{(n)},\
[\textrm{ran}(I-E^{(n)}(\sigma))]=(L^{2}(\mu_{n2}))^{(n)}.$$ The
operators $T$ and $X$ can be expressed in the form
$$T=\left(\begin{array}{cc}
T_{1}&0\\
0&T_{2}\\
\end{array}\right)
\begin{array}{c}
(L^{2}(\mu_{n1}))^{(n)}\\
(L^{2}(\mu_{n2}))^{(n)}\\
\end{array},\quad
X=\left(\begin{array}{cc}
X_{11}&X_{12}\\
X_{21}&X_{22}\\
\end{array}\right)
\begin{array}{c}
(L^{2}(\mu_{n1}))^{(n)}\\
(L^{2}(\mu_{n2}))^{(n)}\\
\end{array},$$ where
$$T_{i}=\left(\begin{array}{cccccc}
M^{i}_{\phi}&M^{i}_{\phi_{12}}&M^{i}_{\phi_{13}}&\cdots&M^{i}_{\phi_{1n}}\\
0&M^{i}_{\phi}&M^{i}_{\phi_{23}}&\cdots&M^{i}_{\phi_{2n}}\\
0&0&M^{i}_{\phi}&\cdots&M^{i}_{\phi_{3n}}\\
\vdots&\vdots&\vdots&\ddots&\vdots\\
0&0&0&\cdots&M^{i}_{\phi}\\
\end{array}\right)_{n\times n}
\begin{array}{c}
L^{2}(\mu_{ni})\\
L^{2}(\mu_{ni})\\
L^{2}(\mu_{ni})\\
\vdots\\
L^{2}(\mu_{ni})\\
\end{array},\quad i=1, 2.$$

The equation $T_{1}X_{12}=X_{12}T_{2}$ and the fact that
$M^{1}_{\phi}$ and $M^{2}_{\phi}$ have mutually singular
scalar-valued spectral measures imply that $X_{12}=0$. In the same
way we obtain $X_{21}=0$. Therefore
$X\in\{M^{(n)}_{\phi}\}^{\prime}$.

By Lemma 3.1, we compute the equation $TX=XT$ and obtain that the
operator $X$ has the form
$$X=\left(\begin{array}{cccccc}
M_{\psi}&M_{\psi_{12}}&M_{\psi_{13}}&\cdots&M_{\psi_{1n}}\\
0&M_{\psi}&M_{\psi_{23}}&\cdots&M_{\psi_{2n}}\\
0&0&M_{\psi}&\cdots&M_{\psi_{3n}}\\
\vdots&\vdots&\vdots&\ddots&\vdots\\
0&0&0&\cdots&M_{\psi}\\
\end{array}\right)_{n\times n},\eqno (5)$$ where $\psi,\
\psi_{ij}\in L^{\infty}(\mu_{n})$. Hence every idempotent in
$\{T\}'$ is a spectral projection of $M^{(n)}_{\phi}$. This means
that in the commutant of $T$, there is one and only one bounded
maximal abelian set of idempotents.

\end{proof}

\begin{theorem}
If an operator $T$ is assumed as in Lemma 3.2 and $m$ is a positive
integer, then the S. I. D. of $T^{(m)}$ is unique up to similarity.
\end{theorem}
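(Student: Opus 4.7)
The plan is to reduce uniqueness for $T^{(m)}$ to the uniqueness already proved for $T$ in Lemma~3.2, by exploiting the algebraic structure of $\{T\}'$ revealed by Lemmas~3.1 and~3.2. A key observation is that $\{T\}'$ is \emph{commutative}: at each fiber $\lambda\in\Lambda_n$, Lemma~3.1 identifies $\{T(\lambda)\}'$ with the Toeplitz algebra $\mathds{C}[x]/(x^n)$, which is commutative; assembling over $\Lambda_n$ via the form (5) preserves commutativity. The strictly upper-triangular part $N$ of $\{T\}'$ is a nilpotent ideal of nilpotency index at most $n$, and the diagonal projection $X\mapsto\psi$ identifies $\{T\}'/N\cong L^{\infty}(\mu_n)$. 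Since $\{T\}'$ is commutative, $\{T^{(m)}\}'\cong M_m(\{T\}')$, with nilpotent ideal $M_m(N)$ and quotient $M_m(L^{\infty}(\mu_n))$.

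First I would extend the mutual-singularity argument of Lemma~3.2 to the multiplicity-$m$ setting to show $\{T^{(m)}\}'\subseteq\{M_{\phi}^{(nm)}\}'$; the one-to-oneness of $\phi$ carries over verbatim. This simultaneously identifies every element of $\{T^{(m)}\}'$ as an $n\times n$ block-upper-triangular matrix with constant block-diagonal, whose entries lie in $\{M_{\phi}^{(m)}\}'\cong M_m(L^{\infty}(\mu_n))$, confirming the identification $\{T^{(m)}\}'=M_m(\{T\}')$.

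Next I would carry out a lifting-of-idempotents argument. For a bounded maximal abelian set of idempotents $\mathscr{P}$ in $M_m(\{T\}')$, let $\overline{\mathscr{P}}$ denote its image in $M_m(L^{\infty}(\mu_n))$. Nilpotency of $M_m(N)$, together with the explicit inverse $(I-X)^{-1}=I+X+\cdots+X^{n-1}$ for $X\in M_m(N)$, allows each $P\in\mathscr{P}$ to be conjugated into $M_m(L^{\infty}(\mu_n))$ by a bounded invertible in $I+M_m(N)$. Commutativity of $\mathscr{P}$ should then let one perform this conjugation uniformly across $\mathscr{P}$. After this similarity, $\mathscr{P}$ sits inside $M_m(L^{\infty}(\mu_n))$. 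Two bounded maximal abelian sets of idempotents in $M_m(L^{\infty}(\mu_n))$ are in turn similar via a measurable pointwise diagonalization, yielding an invertible in $M_m(L^{\infty}(\mu_n))\subset\{T^{(m)}\}'$ that conjugates one to the other.

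I expect the main obstacle to be the ``simultaneous straightening'' step: producing a \emph{single} bounded invertible $S\in\{T^{(m)}\}'$ that conjugates all of $\mathscr{P}$ into $M_m(L^{\infty}(\mu_n))$, rather than one such $S$ per idempotent. A careful induction on the nilpotency filtration of $N$, combined with commutativity of $\mathscr{P}$ and bounded measurable selections on the Borel pieces of $\sigma(M_{\phi})$ where the rank of $\overline{\mathscr{P}}$ is constant, should resolve this while keeping the similarity a bounded element of $\{T^{(m)}\}'$.
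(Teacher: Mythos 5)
Your overall two-layer strategy --- work modulo the nilpotent ideal $N$ of strictly upper triangular elements, straighten the image in $M_m(L^{\infty}(\mu_n))$ by a bounded measurable diagonalization, and kill the nilpotent part by conjugating with invertibles of the form $I+(\text{nilpotent})$ --- is essentially the skeleton of the paper's Lemma 3.4, and your identifications ($\{T^{(m)}\}'\cong M_m(\{T\}')$, $\{T\}'$ commutative with $\{T\}'/N\cong L^{\infty}(\mu_n)$) are correct. The genuine gap is exactly the step you flag as ``the main obstacle'': producing a \emph{single} bounded invertible $S$ that conjugates the entire maximal abelian set $\mathscr{Q}$ at once. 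Commutativity of $\mathscr{Q}$ plus induction on the nilpotency filtration does not resolve this, because $\mathscr{Q}$ is in general an uncountable family, and both the idempotent-lifting argument and the bounded Borel selection of Azoff produce a conjugator \emph{per idempotent}, with no mechanism forcing these to be compatible or to admit a common refinement.

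The paper closes this gap with an idea absent from your proposal: finite generation of $\mathscr{Q}$ fiberwise (its Lemma 3.5). Fiberwise $\{T^{(m)}(\lambda)\}'\cong M_m(\mathds{C}[x]/(x^n))$, so a maximal commuting family of idempotents in each fiber is generated by $m$ mutually orthogonal minimal idempotents and has only $2^m$ elements; the content of Lemma 3.5 is that these generators can be realized by $m$ \emph{global} elements $Q_1,\dots,Q_m$ of $\mathscr{Q}$ with normalized fiberwise rank equal to $1$ a.e. The proof uses maximality of $\mathscr{Q}$ (if the rank function of every member of $\mathscr{Q}$ were trivial on some Borel piece, one could adjoin a new commuting idempotent, contradicting maximality) together with an exhaustion of $\Lambda_n$ by Borel subsets and a patching of the locally chosen idempotents. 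With finitely many generators in hand, the paper's Lemma 3.6 conjugates them one at a time to the standard diagonal matrix units, each step remaining in the relative commutant of the previously straightened ones, and maximality then forces $X\mathscr{Q}X^{-1}=\mathscr{P}\oplus\cdots\oplus\mathscr{P}$. If you insert this finite-generation-plus-maximality step, your quotient-by-$N$ formulation goes through; without it, the simultaneous straightening remains unproved.
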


We denote by $\mathscr{P}$ the set of projections in $\{T\}'$. This
is the only maximal abelian set of idempotents in $\{T\}'$. The set
$\mathscr{P}\oplus\cdots\oplus\mathscr{P}(m\ \textrm{copies})$ is a
bounded maximal abelian set of idempotents in $M_{m}(\{T\}')$. We
prove Theorem 3.3 in three lemmas.

\begin{lemma}
If $Q\in M_{m}(\{T\}')$ is an idempotent, then there is an
invertible operator $X\in M_{m}(\{T\}')$ such that the operator
$XQX^{-1}$ belongs to the set
$\mathscr{P}\oplus\cdots\oplus\mathscr{P}(m\ \textrm{copies})$.
\end{lemma}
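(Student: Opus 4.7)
The plan is to exploit the fact that $\{T\}'$ fits into a short exact sequence $0 \to \mathcal{N} \to \{T\}' \xrightarrow{\pi} L^\infty(\mu_n) \to 0$, where $\mathcal{N}$ is a nilpotent ideal, and then reduce the classification of idempotents modulo similarity to a diagonalization problem in $M_m(L^\infty(\mu_n))$. Concretely, by the proof of Lemma~3.2 every $X\in\{T\}'$ has the upper-triangular form (5) with a common diagonal entry $M_\psi$, so the assignment $\pi(X)=\psi$ is a surjective algebra homomorphism whose kernel $\mathcal{N}$ consists of the strictly upper-triangular elements of $\{T\}'$ and satisfies $\mathcal{N}^n=0$. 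Extending $\pi$ entrywise yields a surjection $\pi_m:M_m(\{T\}')\to M_m(L^\infty(\mu_n))$ whose kernel $M_m(\mathcal{N})$ is again nilpotent (of index at most $n$).

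First I would pass to the image $\bar Q:=\pi_m(Q)$, an idempotent in $M_m(L^\infty(\mu_n))$. Stratifying $\sigma(M_\phi)$ by the Borel function $\lambda\mapsto\operatorname{rank}\bar Q(\lambda)$ and measurably selecting a change-of-basis on each rank stratum produces an invertible $\bar Y\in M_m(L^\infty(\mu_n))$ together with Borel sets $\sigma_1,\dots,\sigma_m\subseteq\sigma(M_\phi)$ such that $\bar Y\bar Q\bar Y^{-1}=D:=\operatorname{diag}(\chi_{\sigma_1},\dots,\chi_{\sigma_m})$.

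Next I would lift $\bar Y$ entrywise to $Y\in M_m(\{T\}')$ by sending each scalar entry $\bar Y_{ab}$ to the block-diagonal element $M_{\bar Y_{ab}}^{(n)}\in\{T\}'$ (that is, the element of form (5) with $\psi=\bar Y_{ab}$ and vanishing strict upper-triangular part). Since $\pi_m(Y)=\bar Y$ is invertible and $M_m(\mathcal{N})$ is nilpotent, $Y$ itself is invertible via a terminating Neumann series. On the target side, set $\tilde D:=\operatorname{diag}(E^{(n)}(\sigma_1),\dots,E^{(n)}(\sigma_m))$, which lies in $\mathscr{P}\oplus\cdots\oplus\mathscr{P}$ (recall from the proof of Lemma~3.2 that the spectral projections $E^{(n)}(\sigma)$ exhaust the idempotents of $\{T\}'$). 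Then $\tilde D$ is an idempotent in $M_m(\{T\}')$ with $\pi_m(\tilde D)=D=\pi_m(YQY^{-1})$, so $YQY^{-1}$ and $\tilde D$ agree modulo the nil ideal $M_m(\mathcal{N})$. The standard ring-theoretic fact that idempotents congruent modulo a nil ideal are conjugate --- concretely, $Z:=\tilde D\cdot YQY^{-1}+(I-\tilde D)(I-YQY^{-1})$ lies in $I+M_m(\mathcal{N})$, is therefore invertible, and satisfies $Z(YQY^{-1})=\tilde D\,Z$ --- produces an invertible $Z\in M_m(\{T\}')$ intertwining the two. Setting $X=ZY$ yields $XQX^{-1}=\tilde D\in\mathscr{P}\oplus\cdots\oplus\mathscr{P}$, as required.

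The main obstacle is the measurable diagonalization of $\bar Q$ in the first step: although the rank function is Borel, selecting a measurable family of conjugating invertibles on each stratum requires either an appeal to a measurable selection theorem or a careful induction on $m$. This step is available precisely because the hypothesis of Lemma~3.2 (that $\phi$ is one-to-one almost everywhere) forces the diagonal quotient $\{T\}'/\mathcal{N}$ to be the honestly commutative algebra $L^\infty(\mu_n)$, over which the fiber-wise linear algebra of $M_m(\mathbb{C})$ globalizes measurably.
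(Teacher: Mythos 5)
Your argument is correct and follows essentially the same route as the paper: both reduce to the idempotent $\bar Q$ in $M_m(L^\infty(\mu_n))$ coming from the common diagonal of form (5), diagonalize it measurably (the paper does this via Azoff's bounded Borel cross-section for the Jordan form, which is exactly the measurable selection result your first step needs), and then remove the strictly upper-triangular remainder by a unipotent conjugation. Your single conjugator $Z=\tilde D\,(YQY^{-1})+(I-\tilde D)(I-YQY^{-1})\in I+M_m(\mathcal{N})$ packages in one formula what the paper carries out as an iterated elimination of superdiagonal blocks, but the underlying content is the same.
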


\begin{proof}
The idempotent $Q$ is decomposable with respect to the diagonal
algebra generated by the set $\mathscr{P}^{(m)}\equiv\{P\oplus
P\oplus\cdots\oplus P (m\ \textrm{copies}):P\in\mathscr{P}\}$. The
measure space is $\Lambda_{n}$. Based on (5) in Lemma 3.2, the
operator $Q$ can be expressed in the form
$$Q=\left(\begin{array}{ccccccc}
M_{\psi^{11}}&\cdots&M_{\psi^{11}_{1n}}&&M_{\psi^{1m}}&\cdots&M_{\psi^{1m}_{1n}}\\
\vdots&\ddots&\vdots&\cdots&\vdots&\ddots&\vdots\\
0&\cdots&M_{\psi^{11}}&&0&\cdots&M_{\psi^{1m}}\\
&\vdots&&\ddots&&\vdots&\\
M_{\psi^{m1}}&\cdots&M_{\psi^{m1}_{1n}}&&M_{\psi^{mm}}&\cdots&M_{\psi^{mm}_{1n}}\\
\vdots&\ddots&\vdots&\cdots&\vdots&\ddots&\vdots\\
0&\cdots&M_{\psi^{m1}}&&0&\cdots&M_{\psi^{mm}}\\
\end{array}\right)_{mn\times mn}.$$ There is a unitary operator $U_{1}$ such
that the operator $Q_{1}=U_{1}QU^{*}_{1}$ is represented as a block
upper triangular operator-valued matrix in the form
$$Q_{1}=\left(\begin{array}{ccccc}
Q^{1}_{11}&Q^{1}_{12}&Q^{1}_{13}&\cdots&Q^{1}_{1n}\\
0&Q^{1}_{11}&Q^{1}_{23}&\cdots&Q^{1}_{2n}\\
0&0&Q^{1}_{11}&\cdots&Q^{1}_{3n}\\
\vdots&\vdots&\vdots&\ddots&\vdots\\
0&0&0&\cdots&Q^{1}_{11}\\
\end{array}\right)_{n\times n},$$ where
$$Q^{1}_{11}=\left(\begin{array}{ccc}
M_{\psi^{11}}&\cdots&M_{\psi^{1m}}\\
\vdots&\ddots&\vdots\\
M_{\psi^{m1}}&\cdots&M_{\psi^{mm}}\\
\end{array}\right)_{m\times m},\quad
Q^{1}_{ij}=\left(\begin{array}{ccc}
M_{\psi^{11}_{ij}}&\cdots&M_{\psi^{1m}_{ij}}\\
\vdots&\ddots&\vdots\\
M_{\psi^{m1}_{ij}}&\cdots&M_{\psi^{mm}_{ij}}\\
\end{array}\right)_{m\times m},$$ for $1\leq i<j\leq n$.
Notice that $Q^{1}_{11}$ is an idempotent.

Next, we prove that there is an invertible operator $X_{2}$ in
$M_{m}(L^{\infty}(\mu_{n}))$ such that $X_{2}Q^{1}_{11}X_{2}^{-1}$
is a projection in the form
$$X_{2}Q^{1}_{11}X_{2}^{-1}=\left(\begin{array}{cccc}
M_{\chi_{_{S_{1}}}}&0&\cdots&0\\
0&M_{\chi_{_{S_{2}}}}&\cdots&0\\
\vdots&\vdots&\ddots&\vdots\\
0&0&\cdots&M_{\chi_{_{S_{m}}}}\\
\end{array}\right)_{m\times m},$$ where $S_{i}$ is a Borel subset of
$\Lambda_{n}$ for $i=1, 2, \ldots, m$.

For any positive integer $k$, there is a positive integer $l_{k}$
such that given any idempotent $P$ in $M_{m}(\mathds{C})$ with norm
less than $k$ there is an invertible operator $X$ with norm less
than $l_{k}$ such that $XPX^{-1}$ is similar to the corresponding
Jordan block. That is because any idempotent in $M_{m}(\mathds{C})$
is unitarily equivalent to a block matrix in the form
$$\left(\begin{array}{cc}
I&R\\
0&0\\
\end{array}\right)$$ and
$$\left(\begin{array}{cc}
I&R\\
0&I\\
\end{array}\right)
\left(\begin{array}{cc}
I&R\\
0&0\\
\end{array}\right)
\left(\begin{array}{cc}
I&-R\\
0&I\\
\end{array}\right)
=\left(\begin{array}{cc}
I&0\\
0&0\\
\end{array}\right).\eqno(6)$$

For the set defined in (\cite{Azoff_1}, Corollary 3)
$$\begin{array}{r}
\mathscr{E}_{l_{k}}=\{(A,J,X)\in M_{m}(\mathds{C})\times
M_{m}(\mathds{C})\times
M_{m}(\mathds{C}): J\ \textrm{is\ in\ Jordan\ form},\quad\\
||X||\leq l_{k},\ ||X^{-1}||\leq l_{k}\ \ \textrm{and\ }
XAX^{-1}=J\},
\end{array}$$ the set
$\pi_{1}(\mathscr{E}_{l_{k}})$ contains every idempotent whose norm
is less than $k$. By (\cite{Azoff_1}, Theorem 1), the Borel map
$\phi_{l_{k}}:\pi_{1}(\mathscr{E}_{l_{k}})\rightarrow\pi_{3}(\mathscr{E}_{l_{k}})$
is bounded. Therefore the equivalent class of
$\phi_{l_{\lceil||Q^{1}_{11}||\rceil}}\circ Q^{1}_{11}(\cdot)$ is
the $X_{2}$ we need in $M_{m}(L^{\infty}(\mu_{n}))$.

Write $Q_{2}$ for $(X_{2}^{(n)})Q_{1}(X_{2}^{(n)})^{-1}$. There is a
unitary operator $U_{3}$ in $M_{m}(L^{\infty}(\mu_{n}))$ such that
$(U_{3}^{(n)})Q_{2}(U_{3}^{(n)})^{*}$ can be expressed in the form
$$(U_{3}^{(n)})Q_{2}(U_{3}^{(n)})^{*}=\left(\begin{array}{ccccc}
Q^{3}_{11}&Q^{3}_{12}&Q^{3}_{13}&\cdots&Q^{3}_{1n}\\
0&Q^{3}_{11}&Q^{3}_{23}&\cdots&Q^{3}_{2n}\\
0&0&Q^{3}_{11}&\cdots&Q^{3}_{3n}\\
\vdots&\vdots&\vdots&\ddots&\vdots\\
0&0&0&\cdots&Q^{3}_{11}\\
\end{array}\right)_{n\times n},$$ where
$$Q^{3}_{11}=\left(\begin{array}{cccc}
M_{\chi_{_{S_{1}}}}&0&\cdots&0\\
0&M_{\chi_{_{S_{2}}}}&\cdots&0\\
\vdots&\vdots&\ddots&\vdots\\
0&0&\cdots&M_{\chi_{_{S_{m}}}}\\
\end{array}\right)_{m\times m}.$$ The set $S_{i}$ is a Borel subset of
$\Lambda_{n}$ for $i=1, 2, \ldots, m$, and $S_{i+1}\subseteq S_{i}$
for $i=1, 2, \ldots, m-1$. Write $Q_{3}$ for
$(U_{3}^{(n)})Q_{2}(U_{3}^{(n)})^{*}$. Notice that $Q_{3}$ belongs
to $\{U_{1}T^{(m)}U^{*}_{1}\}^{\prime}$.

Next, we prove that there is an invertible operator $X_{4}$ in
$\{U_{1}T^{(m)}U^{*}_{1}\}^{\prime}$ such that
$X_{4}Q_{3}X^{-1}_{4}$ equals the following projection
$$Q_{4}=\left(\begin{array}{ccccc}
Q^{3}_{11}&0&0&\cdots&0\\
0&Q^{3}_{11}&0&\cdots&0\\
0&0&Q^{3}_{11}&\cdots&0\\
\vdots&\vdots&\vdots&\ddots&\vdots\\
0&0&0&\cdots&Q^{3}_{11}\\
\end{array}\right)_{n\times n}.$$

First, multiply each entry in the lower triangular $m\times m$
matrix form of $Q^{3}_{i,i+1}$ by $-1$ and denote this new $m\times
m$ matrix form by $X^{3}_{i,i+1}$, for $i=1, 2, \ldots, n-1$. In
$\{U_{1}T^{(m)}U^{*}_{1}\}^{\prime}$, we can construct an operator
$X^{3}_{1}$ in the form
$$X^{3}_{1}=\left(\begin{array}{ccccc}
I&X^{3}_{12}&*^{1}_{13}&\cdots&*^{1}_{1n}\\
0&I&X^{3}_{23}&\cdots&*^{1}_{2n}\\
0&0&I&\cdots&*^{1}_{3n}\\
\vdots&\vdots&\vdots&\ddots&\vdots\\
0&0&0&\cdots&I\\
\end{array}\right)_{n\times n}.$$ The operator $X^{3}_{1}$ is
invertible and $\sigma(X^{3}_{1})=\{1\}$. The fact that
$\{U_{1}T^{(m)}U^{*}_{1}\}^{\prime}$ is a subalgebra of
$\mathscr{L}(\mathscr{H}^{(m)})$ implies that the equation
$\sigma(X^{3}_{1})=\{1\}$ holds in
$\{U_{1}T^{(m)}U^{*}_{1}\}^{\prime}$. Thus $X^{3}_{1}$ is invertible
in $\{U_{1}T^{(m)}U^{*}_{1}\}^{\prime}$. Therefore the operator
$(X^{3}_{1})Q_{3}(X^{3}_{1})^{-1}$ is in the form
$$\left(\begin{array}{ccccc}
Q^{3}_{11}&0&*^{2}_{13}&\cdots&*^{2}_{1n}\\
0&Q^{3}_{11}&0&\cdots&*^{2}_{2n}\\
0&0&Q^{3}_{11}&\cdots&*^{2}_{3n}\\
\vdots&\vdots&\vdots&\ddots&\vdots\\
0&0&0&\cdots&Q^{3}_{11}\\
\end{array}\right)_{n\times n}.$$
Repeat the above procedure. We construct invertible operators
$X^{3}_{i}$ one by one in $\{U_{1}T^{(m)}U^{*}_{1}\}^{\prime}$, for
$i=1, \ldots, n-1$. After $n-1$ steps, we obtain $Q_{4}$. Denote by
$X_{4}$ the product of $X^{3}_{i}$s. The equation
$Q_{4}=X_{4}Q_{3}X^{-1}_{4}$ holds. Therefore
$X=U^{*}_{1}X_{4}U_{3}^{(n)}X_{2}^{(n)}U_{1}$ is the invertible
operator in $M_{m}(\{T\}')$ such that $XQX^{-1}$ is a projection in
$\mathscr{P}\oplus\cdots\oplus\mathscr{P}(m\ \textrm{copies})$.
\end{proof}

\begin{lemma}
If $\mathscr{Q}$ is a bounded maximal abelian set of idempotents in
$M_{m}(\{T\}^{\prime})$, then there is a subset
$\{Q_{i}\}^{2^{m}}_{i=1}\subseteq \mathscr{Q}$ such that for
$\lambda$ in $\Lambda_{n}$, the equation
$\{Q_{i}(\lambda)\}^{2^{m}}_{i=1}=\mathscr{Q}(\lambda)$ holds a.~e.\
on $\Lambda_{n}$.
\end{lemma}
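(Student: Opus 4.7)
The plan is to first establish a fiberwise bound $|\mathscr{Q}(\lambda)|\le 2^{m}$ a.e.\ and then lift finitely many fiber enumerations back to elements of $\mathscr{Q}$ by invoking maximality. For the bound, I would show that for a.e.\ $\lambda\in\Lambda_{n}$ the set $\mathscr{Q}(\lambda):=\{Q(\lambda):Q\in\mathscr{Q}\}$ contains at most $2^{m}$ distinct idempotents. By Lemma 3.1 the fiber algebra $\{T(\lambda)\}^{\prime}$ consists of upper triangular matrices with a single scalar on the diagonal, so its only idempotents are $0$ and $I_{n}$. Evaluating the three-step conjugation of Lemma 3.4 fiberwise shows that every idempotent in $M_{m}(\{T(\lambda)\}^{\prime})$ is similar, inside that fiber algebra, to a diagonal matrix with entries in $\{0,I_{n}\}$. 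Because $\mathscr{Q}(\lambda)$ is a commuting family of such $0$-$1$ diagonalizable idempotents, simultaneous diagonalization produces a single invertible $W(\lambda)\in M_{m}(\{T(\lambda)\}^{\prime})$ conjugating every element of $\mathscr{Q}(\lambda)$ to a diagonal $\{0,I_{n}\}$-matrix, and there are exactly $2^{m}$ such matrices.

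For the lifting, I would partition $\Lambda_{n}$ measurably as $\bigsqcup_{k=1}^{2^{m}}\Lambda_{n}^{(k)}$ with $|\mathscr{Q}(\lambda)|=k$ on $\Lambda_{n}^{(k)}$, and on each piece produce a measurable enumeration $\lambda\mapsto (R_{1}(\lambda),\ldots,R_{k}(\lambda))$ of $\mathscr{Q}(\lambda)$. Patching across pieces (with arbitrary repetition when $k<i$) yields $2^{m}$ bounded measurable idempotent fields $Q_{i}\in M_{m}(\{T\}^{\prime})$. Fiberwise each $Q_{i}(\lambda)$ lies in $\mathscr{Q}(\lambda)$ and therefore commutes with every element of $\mathscr{Q}$ at $\lambda$; since decomposable operators commute globally iff they commute fiberwise, $Q_{i}$ commutes with all of $\mathscr{Q}$, and maximality forces $Q_{i}\in\mathscr{Q}$. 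The construction guarantees $\{Q_{i}(\lambda)\}_{i=1}^{2^{m}}=\mathscr{Q}(\lambda)$ a.e.\ on $\Lambda_{n}$, as required.

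The hard part I expect is producing the measurable enumerations on each $\Lambda_{n}^{(k)}$. My intended workaround uses separability of $\mathscr{H}$ to extract a countable sequence $\{P_{\alpha}\}_{\alpha=1}^{\infty}\subseteq\mathscr{Q}$ whose fiberwise values already exhaust $\mathscr{Q}(\lambda)$ a.e.; the finiteness of $\mathscr{Q}(\lambda)$ makes this extraction tractable because only finitely many distinct fiber values ever occur. Each subset $A\subseteq\{1,\ldots,N\}$ of size at most $2^{m}$ then determines the Borel set $\{\lambda:\{P_{\alpha}(\lambda)\}_{\alpha\in A}=\mathscr{Q}(\lambda)\}$, and these countably many Borel sets provide the measurable partition and the selections needed, at which point the maximality argument of the previous paragraph closes the loop.
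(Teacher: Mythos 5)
Your strategy is genuinely different from the paper's. The paper never attempts to enumerate the fiber sets $\mathscr{Q}(\lambda)$; instead it introduces the normalized rank $r_{Q}(\lambda)=\frac{1}{n}\mathrm{rank}\,Q(\lambda)$ and runs a descending induction: if no element of $\mathscr{Q}$ has $0<r_{Q}<m$ on some piece of $\Lambda_{n}$, maximality is violated by an explicitly constructed commuting projection outside $\mathscr{Q}$; a countable Borel exhaustion then patches these local idempotents into a genuine element of $\mathscr{Q}$ of intermediate rank, and after finitely many such reductions one obtains $m$ pairwise orthogonal elements with $r\equiv 1$ whose $2^{m}$ sums form the required family. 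Your first half --- the bound $\lvert\mathscr{Q}(\lambda)\rvert\le 2^{m}$, which follows because $\{T(\lambda)\}^{\prime}$ modulo its radical is $\mathds{C}$, commuting idempotents whose difference lies in the radical coincide (since $(e-f)^{3}=e-f$ for commuting idempotents), and a commuting set of idempotents in $M_{m}(\mathds{C})$ has at most $2^{m}$ members --- is correct and is arguably cleaner than anything in the paper's argument.

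The gap is in the second half, and it sits exactly where the difficulty of the lemma lives. Since $\mathscr{Q}$ is in general uncountable, the fiber set $\mathscr{Q}(\lambda)=\{Q(\lambda):Q\in\mathscr{Q}\}$ is not even well defined pointwise: each $Q(\cdot)$ is only an a.e.-equivalence class, and an uncountable union of null sets need not be null. Consequently your Borel sets $\{\lambda:\{P_{\alpha}(\lambda)\}_{\alpha\in A}=\mathscr{Q}(\lambda)\}$ are not yet meaningful, and the assertion that separability lets you ``extract a countable sequence whose fiberwise values already exhaust $\mathscr{Q}(\lambda)$ a.e.'' is essentially the statement being proved (with $\aleph_{0}$ in place of $2^{m}$); finiteness of each individual fiber set does not by itself produce the selection. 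The step can be repaired: take a countable SOT-dense subset $\mathscr{C}\subseteq\mathscr{Q}$ (bounded subsets of $\mathscr{L}(\mathscr{H})$ are SOT-separable and metrizable), observe that all fiberwise commutation and idempotency relations for $\mathscr{C}$ hold off a single null set so that $\lvert\mathscr{C}(\lambda)\rvert\le 2^{m}$ a.e., and then show that for any $Q\in\mathscr{Q}$ an SOT-convergent sequence from $\mathscr{C}$ admits a subsequence converging fiberwise a.e., so that $Q(\lambda)$ lies in the finite, hence closed, set $\mathscr{C}(\lambda)$. None of this is in your proposal, and without it the enumeration, the measurable partition, and the final appeal to maximality all hang in the air. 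The paper's rank-reduction argument avoids the problem entirely because it only ever manipulates countably many explicitly constructed elements of $\mathscr{Q}$.
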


\begin{proof}
By the above lemma, we know that for every $Q$ in $\mathscr{Q}$,
there is an invertible operator $X$ in $M_{m}(\{T\}^{\prime})$ such
that $XQX^{-1}$ is a projection in
$\mathscr{P}\oplus\cdots\oplus\mathscr{P}(m\ \textrm{copies})$. Thus
we define a function
$$r_{Q}(\lambda)=\frac{1}{n}\textrm{rank}(Q(\lambda)),\ \forall Q\in\mathscr{Q},\
\lambda\in\Lambda_{n}.$$ The function $r_{Q}$ is in the equivalent
class of certain simple function. To prove this lemma, we only need
to show that there are $m$ idempotents $Q_{i}$ in $\mathscr{Q}$ such
that the equation $r_{Q_{i}}(\lambda)=1$, for $i=1, \ldots, m$ holds
a.~e.\ on $\Lambda_{n}$ and $Q_{i}Q_{j}=0$, for $i\neq j$. We prove
this in two steps.

Step 1, we prove that there is an idempotent $Q'$ in $\mathscr{Q}$
such that the relation $0<r_{Q'}(\lambda)<m$ holds a.~e.\ on
$\Lambda_{n}$.

If the relation $\{r_{Q}(\lambda):Q\in\mathscr{Q}\}=\{0,m\}$ holds
a.~e.\ on $\Lambda_{n}$, then we can construct a strongly measurable
operator-valued constant function $Q^{\prime}(\cdot)$ satisfying the
following properties:
\begin{enumerate}
\item $Q^{\prime}(\cdot)$ is nontrivial a.~e.\ on
$\Lambda_{n}$.
\item The equivalent class $Q^{\prime}$ of $Q^{\prime}(\cdot)$ is a
projection in $M_{m}(\{T\}^{\prime})$ commuting with every
idempotent in $\mathscr{Q}$.
\item $Q^{\prime}$ does not belong to $\mathscr{Q}$.
\end{enumerate}
This contradicts with the assumption that $\mathscr{Q}$ is a maximal
abelian set of idempotents. Therefore, there are an idempotent
$Q^{\prime}_{1}$ in $\mathscr{Q}$ and a Borel subset $\Lambda_{n1}$
of $\Lambda_{n}$ with nonzero measure such that the relation
$0<r_{Q^{\prime}_{1}}(\lambda)<m$ holds a.~e.\ on $\Lambda_{n1}$.
Thus there are an idempotent $Q^{\prime}_{2}$ in $\mathscr{Q}$ and a
Borel subset $\Lambda_{n2}$ of $\Lambda_{n}\backslash\Lambda_{n1}$
with nonzero measure such that the relation
$0<r_{Q^{\prime}_{2}}(\lambda)<m$ holds a.~e.\ on $\Lambda_{n2}$.
Carry out this procedure and we obtain a subset
$\{Q^{\prime}_{i}\}^{\infty}_{i=1}$ of $\mathscr{Q}$ and
$\Lambda_{n}=\bigcup\limits^{\infty}_{i=1}\Lambda_{ni}$. Define
$$Q^{\prime}=\sum\limits^{\infty}_{i=1}[\chi_{_{\Lambda_{ni}}}]\cdot Q^{\prime}_{i},\quad
\chi_{_{\Lambda_{ni}}}(\lambda) =\left\{\begin{array}{cl}
I\in M_{mn}(\mathds{C}),&\lambda\in\Lambda_{ni};\\
0\in
M_{mn}(\mathds{C}),&\lambda\in\Lambda_{n}\backslash\Lambda_{ni}.
\end{array}\right.$$

The idempotent $Q^{\prime}$ is what we want in step 1.

Note that $r_{Q'}$ is in the equivalent class of a simple function.
We can write $\Lambda_{n}$ in the form of a union of disjoint Borel
subsets $\Lambda^{\prime}_{ni}$ of $\Lambda_{n}$ such that the
equation $r_{Q'}(\lambda)=i$ holds a.~e.\ on
$\Lambda^{\prime}_{ni}$. (Some $\Lambda^{\prime}_{ni}$s may be of
measure zero.) Write
$$\mathscr{Q}_{i}=\{[\chi_{_{\Lambda^{\prime}_{ni}}}]QQ^{\prime}:Q\in\mathscr{Q}\},\
\textrm{for}\ i=1, \ldots, m-1.$$

Step 2, we prove that for a fixed $i$ larger than $1$, if the set
$\Lambda^{\prime}_{ni}$ is not of measure zero, then there is an
idempotent $Q^{\prime\prime}\in\mathscr{Q}_{i}$ such that the
relation $0<Q^{\prime\prime}(\lambda)<i$ holds a.~e.\ on
$\Lambda^{\prime}_{ni}$.

Suppose that the relation
$\{r_{Q}(\lambda):Q\in\mathscr{Q}_{i}\}=\{0,i\}$ holds a.~e.\ on
$\Lambda^{\prime}_{ni}$. By the above lemma we know that there is an
invertible operator $X\in M_{m}(\{T\}')$ such that the equation
$$XQ^{\prime}X^{-1}(\lambda)=\left(\begin{array}{ccc}
\begin{array}{cc}
I&0\\
0&0\\
\end{array}&\cdots&\begin{array}{cc}
0&0\\
0&0\\
\end{array}\\
\vdots&\ddots&\vdots\\
\begin{array}{cc}
0&0\\
0&0\\
\end{array}&\cdots&\begin{array}{cc}
I&0\\
0&0\\
\end{array}\\
\end{array}\right)_{2n \times 2n}
\begin{array}{c}
\mathds{C}^{(i)}\\
\mathds{C}^{(m-i)}\\
\vdots\\
\mathds{C}^{(i)}\\
\mathds{C}^{(m-i)}\\
\end{array}$$ holds a.~e.\ on $\Lambda^{\prime}_{ni}$.
Therefore we can construct an idempotent $Q^{\prime\prime}$ in
$M_{m}(\{T\}')$ satisfying that
\begin{enumerate}
\item $Q^{\prime\prime}(\lambda)$ is a proper sub-idempotent of
$Q^{\prime}(\lambda)$ a.~e.\ on $\Lambda^{\prime}_{ni}$.
\item $Q^{\prime\prime}$ commutes with every idempotent in
$\mathscr{Q}_{i}$.
\item $Q^{\prime\prime}$ does not belong to $\mathscr{Q}_{i}$.
\end{enumerate}
Thus $\mathscr{Q}$ is not a maximal abelian set of idempotents. This
is a contradiction. Therefore there are an idempotent
$Q^{\prime\prime}_{1}\in\mathscr{Q}_{i}$ and a Borel subset
$\Lambda^{\prime}_{ni1}$ of $\Lambda^{\prime}_{ni}$ with nonzero
measure such that the relation
$0<r_{Q^{\prime\prime}_{1}}(\lambda)<i$ holds a.~e.\ on
$\Lambda^{\prime}_{ni1}$. Thus there are an idempotent
$Q^{\prime\prime}_{2}\in\mathscr{Q}_{i}$ and a Borel subset
$\Lambda^{\prime}_{ni2}$ of
$\Lambda^{\prime}_{ni}\backslash\Lambda^{\prime}_{ni1}$ with nonzero
measure such that the relation
$0<r_{Q^{\prime\prime}_{2}}(\lambda)<i$ holds a.~e.\ on
$\Lambda^{\prime}_{ni2}$. Carry out this procedure and we obtain a
subset $\{Q^{\prime\prime}_{k}\}^{\infty}_{k=1}$ of
$\mathscr{Q}_{i}$ and
$\Lambda^{\prime}_{ni}=\bigcup\limits^{\infty}_{k=1}\Lambda^{\prime}_{nik}$.
Write
$$Q^{\prime\prime}=\sum\limits^{\infty}_{k=1}[\chi_{_{\Lambda^{\prime}_{nik}}}]\cdot
Q^{\prime\prime}_{k}.$$ This idempotent $Q^{\prime\prime}$ is what
we want in step 2.

After finite steps, we obtain that there is an idempotent $Q$ in
$\mathscr{Q}$ such that the equation $r_{Q}(\lambda)=1$ holds a.~e.\
on $\Lambda_{n}$.

Repeat the above procedure, we can find $m$ idempotents $Q_{i}$ in
$\mathscr{Q}$ such that the equation $r_{Q_{i}}(\lambda)=1$, for
$i=1, \ldots, m$ holds a.~e.\ on $\Lambda_{n}$ and $Q_{i}Q_{j}=0$,
for $i\neq j$. Thus we can obtain $2^{m}$ idempotents that we need.

\end{proof}

\begin{lemma}
If $\mathscr{Q}$ is a bounded maximal abelian set of idempotents in
$M_{m}(\{T\}^{\prime})$, then there is an invertible operator  $X\in
M_{m}(\{T\}^{\prime})$ such that
$$X\mathscr{Q}X^{-1}=\mathscr{P}\oplus\cdots\oplus\mathscr{P}(m\
\textrm{copies}).$$
\end{lemma}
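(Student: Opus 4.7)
The plan is to combine Lemmas 3.4 and 3.5 by first extracting a distinguished orthogonal family of ``minimal'' idempotents from $\mathscr{Q}$, and then simultaneously diagonalizing them via iterated use of Lemma 3.4.

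First I would extract an orthogonal resolution of the identity from $\mathscr{Q}$. The construction in the proof of Lemma 3.5 yields $m$ idempotents $Q_{1},\ldots,Q_{m}\in\mathscr{Q}$ satisfying $r_{Q_{i}}(\lambda)=1$ a.e.\ on $\Lambda_{n}$ and $Q_{i}Q_{j}=0$ for $i\neq j$. Their sum $\sum_{i=1}^{m}Q_{i}$ is an idempotent with $r(\lambda)=m$ a.e., hence it must equal the identity of $M_{m}(\{T\}')$. From the enumeration of $\mathscr{Q}(\lambda)$ in Lemma 3.5, every element of $\mathscr{Q}$ is of the form $\sum_{i\in S}Q_{i}$ for some $S\subseteq\{1,\ldots,m\}$, so it suffices to simultaneously diagonalize the finite family $\{Q_{i}\}_{i=1}^{m}$.

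Next I would diagonalize the $Q_{i}$'s one at a time while preserving the diagonalizations already achieved. Applying Lemma 3.4 to $Q_{1}$ produces an invertible $Y_{1}\in M_{m}(\{T\}')$ with $Y_{1}Q_{1}Y_{1}^{-1}\in\mathscr{P}\oplus\cdots\oplus\mathscr{P}$. Because $r_{Q_{1}}(\lambda)=1$ a.e., the $m$ diagonal summands form a measurable partition of the identity on $\Lambda_{n}$, and a measurable permutation assembled from characteristic functions of Borel subsets of $\Lambda_{n}$ allows me to assume $Y_{1}Q_{1}Y_{1}^{-1}$ is a single spectral projection $E^{(n)}(\sigma_{1})$ sitting in the $(1,1)$ slot. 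The orthogonality $Q_{1}Q_{i}=Q_{i}Q_{1}=0$ then forces each conjugate $Y_{1}Q_{i}Y_{1}^{-1}$ with $i\geq 2$ to live inside the complementary $(m-1)\times(m-1)$ corner of $M_{m}(\{T\}')$, where Lemma 3.4 applies again. Iterating over $i=2,\ldots,m$ yields a product $X=Y_{m}\cdots Y_{1}\in M_{m}(\{T\}')$ that is invertible and satisfies $XQ_{i}X^{-1}=E^{(n)}(\sigma_{i})$ in the $(i,i)$ slot for some Borel partition $\{\sigma_{i}\}$ of $\sigma(M_{\phi})$. Since every element of $\mathscr{Q}$ is a sum $\sum_{i\in S}Q_{i}$, this same $X$ transports $\mathscr{Q}$ in its entirety into $\mathscr{P}\oplus\cdots\oplus\mathscr{P}$ ($m$ copies).

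The main obstacle I anticipate is the inductive step: after Lemma 3.4 has been applied to $Q_{1}$, one must verify measurably that the conjugated idempotents $Y_{1}Q_{i}Y_{1}^{-1}$ for $i\geq 2$ genuinely sit in the corner complementary to $Y_{1}Q_{1}Y_{1}^{-1}$, so that the inductive hypothesis can be reapplied without disturbing what was already diagonalized. This demands a Peirce-type decomposition with respect to $Y_{1}Q_{1}Y_{1}^{-1}$ combined with the orthogonality relations $Q_{i}Q_{1}=0$, and re-running the Borel selection argument of (\cite{Azoff_1}, Theorem~1) inside the corner algebra to keep every conjugating invertible measurable in $\lambda\in\Lambda_{n}$.
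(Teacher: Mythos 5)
Your proposal follows essentially the same route as the paper: extract the $m$ mutually orthogonal idempotents $Q_{1},\ldots,Q_{m}$ with $r_{Q_{i}}\equiv 1$ from Lemma 3.5, then apply Lemma 3.4 iteratively, at each stage working in the corner complementary to the already-diagonalized idempotents, and finish by maximality of $\mathscr{P}\oplus\cdots\oplus\mathscr{P}$. One small slip: since you correctly note $\sum_{i}Q_{i}=I$, the final images $XQ_{i}X^{-1}$ must each be the full identity in the $i$th diagonal slot (as in the paper), not spectral projections $E^{(n)}(\sigma_{i})$ for a proper Borel partition $\{\sigma_{i}\}$ of $\sigma(M_{\phi})$ -- but this does not affect the argument.
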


\begin{proof}
By the above lemma, we can find $m$ idempotents $Q_{i}$ in
$\mathscr{Q}$ such that the equation $r_{Q_{i}}(\lambda)=1$, for
$i=1, \ldots, m$ holds a.~e.\ on $\Lambda_{n}$ and $Q_{i}Q_{j}=0$,
for $i\neq j$. For $Q_{1}$, there is an invertible operator
$X_{1}\in M_{m}(\{T\}^{\prime})$ such that $X_{1}Q_{1}X^{-1}_{1}$ is
a projection in $\mathscr{P}\oplus\cdots\oplus\mathscr{P}(m\
\textrm{copies})$. The invertible operator $X_{1}$ can be chosen
such that $X_{1}Q_{1}X^{-1}_{1}$ is in the form
$$X_{1}Q_{1}X^{-1}_{1}=\left(\begin{array}{cccc}
I&0&\cdots&0\\
0&0&\cdots&0\\
\vdots&\vdots&\ddots&\vdots\\
0&0&\cdots&0\\
\end{array}\right)_{m\times m}.$$

Thus there is an invertible operator $X_{2}\in
M_{m}(\{T\}^{\prime})$ such that
$X_{2}X_{1}Q_{1}X^{-1}_{1}X^{-1}_{2}$ and
$X_{2}X_{1}Q_{2}X^{-1}_{1}X^{-1}_{2}$ are in the form
$$X_{2}X_{1}Q_{1}X^{-1}_{1}X^{-1}_{2}=\left(\begin{array}{cccc}
I&0&\cdots&0\\
0&0&\cdots&0\\
\vdots&\vdots&\ddots&\vdots\\
0&0&\cdots&0\\
\end{array}\right)_{m\times m},$$
$$X_{2}X_{1}Q_{2}X^{-1}_{1}X^{-1}_{2}=\left(\begin{array}{cccc}
0&0&\cdots&0\\
0&I&\cdots&0\\
\vdots&\vdots&\ddots&\vdots\\
0&0&\cdots&0\\
\end{array}\right)_{m\times m}.$$

By this procedure, we obtain $\{X_{i}\}^{m}_{i=1}$ such that
$XQ_{i}X^{-1}$ is in the form
$$XQ_{i}X^{-1}=\left(\begin{array}{ccccc}
0&\cdots&0&\cdots&0\\
\vdots&\ddots&\vdots&\ddots&\vdots\\
0&\cdots&I&\cdots&0\\
\vdots&\ddots&\vdots&\ddots&\vdots\\
0&\cdots&0&\cdots&0\\
\end{array}\right)_{m\times m}
\begin{array}{c}
\\
\vdots\\
i\ \textrm{th}\\
\vdots\\
\\
\end{array}
,$$ where $X=X_{m}X_{m-1}\cdots X_{1}$. This $X$ in
$M_{m}(\{T\}^{\prime})$ is the invertible operator that we need.

\end{proof}

With the above three lemmas, we finish the proof of Theorem 3.3.

\begin{corollary}
If an operator $T$ is assumed as in Theorem 3.3, then the $K_{0}$
group of $\{T\}^{\prime}$ is isomorphic to the set
$$K_{0}(\{T\}^{\prime})\cong\{f:\sigma(M_{\phi})\rightarrow\mathds{Z}|f\
\textrm{is\ bounded\ Borel.}\}.\eqno (7)$$
\end{corollary}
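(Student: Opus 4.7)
I plan to build an explicit isomorphism $\rho: K_{0}(\{T\}^{\prime}) \to \mathcal{F}$, where $\mathcal{F} := \{f : \sigma(M_{\phi}) \to \mathds{Z} \mid f \text{ bounded Borel}\}$, using a rank-function invariant applied to the canonical forms furnished by Lemma 3.4. By Lemma 3.2 the idempotents of $\{T\}^{\prime}$ are precisely the spectral projections $\{E^{(n)}(S)\}$ of $M^{(n)}_{\phi}$, so $\mathscr{P}$ is Boolean-algebra isomorphic to the $\nu_{n}$-classes of Borel subsets of $\sigma(M_{\phi})$. Lemma 3.4 applied inside $M_{m}(\{T\}^{\prime})$ then guarantees that every idempotent $Q \in M_{m}(\{T\}^{\prime})$ is similar (within $M_{m}(\{T\}^{\prime})$) to a diagonal projection $\operatorname{diag}(E^{(n)}(S_{1}), \dots, E^{(n)}(S_{m}))$; a further conjugation by an atomwise scalar permutation, built as $\sum_{A} E^{(n)}(A) \cdot \pi_{A}$ where $A$ runs over the atoms of the finite Boolean subalgebra generated by the $S_{i}$, brings this to the nested canonical form $S_{1}^{\ast} \supseteq \cdots \supseteq S_{m}^{\ast}$.

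I would then define
$$\rho([Q]) := \sum_{i=1}^{m} \chi_{S_{i}^{\ast}} \in \mathcal{F},$$
and extend to formal differences in the usual way. Well-definedness rests on the observation that the pointwise fiber rank $\tfrac{1}{n}\operatorname{rank}(Q(\lambda))$ is a conjugation invariant and equals $\sum_{i} \chi_{S_{i}^{\ast}}(\phi(\lambda))$ on the nested canonical form; since $\phi$ is one-to-one a.e., this descends to a bounded integer-valued Borel function on $\sigma(M_{\phi})$ that does not depend on the chosen similarity. Additivity of $\rho$ under direct sums is immediate from concatenating diagonals, so $\rho$ is a group homomorphism. Injectivity is then automatic: if $\rho([Q]) = f$ the nested canonical form is forced to satisfy $S_{i}^{\ast} = \{\mu : f(\mu) \ge i\}$, so $\rho$ determines the similarity class of $Q$ and hence its $K_{0}$ class.

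For surjectivity, given $f \in \mathcal{F}$ with $0 \le f \le m$ I set $S_{i}^{\ast} := \{\mu \in \sigma(M_{\phi}) : f(\mu) \ge i\}$ and note that $\operatorname{diag}(E^{(n)}(S_{1}^{\ast}), \dots, E^{(n)}(S_{m}^{\ast}))$ is a projection in $M_{m}(\{T\}^{\prime})$ whose $\rho$-image is $f$; a general integer-valued bounded Borel $f$ is realized as the difference $f_{+} - f_{-}$. I expect the main technical step to be the preliminary atomwise-permutation argument that reduces an arbitrary diagonal form to the nested one: one needs to verify that the piecewise permutation really is an invertible element of $M_{m}(\{T\}^{\prime})$ and that conjugation by it produces the claimed reordering on each atom. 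Everything else then flows formally from Lemma 3.4 together with the Boolean structure of $\mathscr{P}$ established in Lemma 3.2.
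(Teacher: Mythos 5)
Your proposal is correct and follows essentially the same route as the paper, which proves the corollary only implicitly: it invokes Lemma 3.4/Theorem 3.3 to put every idempotent of $M_{m}(\{T\}^{\prime})$ into diagonal form over $\mathscr{P}$ and then, in Example 3.8, uses the normalized fiberwise trace $\frac{1}{n}\mathrm{Tr}(Q(\cdot))$ as the complete invariant identifying the monoid of similarity classes with bounded Borel $\mathds{N}$-valued functions before passing to the Grothendieck group. Your $\rho([Q])=\sum_{i}\chi_{S_{i}^{\ast}}$ is exactly that invariant (pushed forward along the a.e.\ injective $\phi$ to land on $\sigma(M_{\phi})$), and your nested-canonical-form and surjectivity steps just make explicit what the paper leaves to the reader.
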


We give an example to show that the $K_{0}$ group of the commutant
of an operator $T$ as in the above Corollary is isomorphic to the
corresponding set as (7).

\begin{example}
Let $z(t)=t$, $t\in[0,1]$. The multiplication operator $M_z$
corresponding to $z$ is the operator on $L^2([0,1])$ defined by
$$(M_{z}f)(t)=t\cdot f(t),\quad f\in L^2([0,1]).$$ Denote by $T$ the
$2\times 2$ operator-valued matrix in the form
$$T=\left(\begin{array}{cc}
M_{z}&M_{\psi}\\
0&M_{z}\end{array}\right),\ \psi\in L^{\infty}([0,1]),\
\psi(\lambda)\neq 0\ \textrm{~a.~e.\ on~} [0,1].$$ By Lemma 3.2, we
know that every idempotent $P$ in $\{T\}^{\prime}$ is of the form
$$P=\left(\begin{array}{cc}
M_{\chi_{_{S}}}&0\\
0&M_{\chi_{_{S}}}\end{array}\right),$$ where $S$ is a Borel subset
of $[0,1]$. By Theorem 3.3, we know that, for any positive integer
$m$, in $\{T^{(m)}\}^{\prime}$ every idempotent is similar to a
projection in the form
$$P=\left(\begin{array}{cccc}
M^{(2)}_{\chi_{_{S_{1}}}}&0&\cdots&0\\
0&M^{(2)}_{\chi_{_{S_{2}}}}&\cdots&0\\
\vdots&\vdots&\ddots&\vdots\\
0&0&\cdots&M^{(2)}_{\chi_{_{S_{m}}}}
\end{array}\right), \eqno (8)$$ where $S_{i}$ is a Borel subset of
$[0,1]$. Denote the standard trace on $M_{2m}(\mathds{C})$ by Tr.
The bounded function $\frac{1}{2}\textrm{Tr}(P(\lambda))$ maps
$[0,1]$ in $\mathds{N}$ for almost every $\lambda$ in $[0,1]$.
Therefore, in $\bigcup^{\infty}_{m=1}\{T^{(m)}\}^{\prime}$, for
every idempotent $P$, denote by $[P]$ the class of idempotents
similar to $P$. In every $[P]$, there are projections as $(8)$.
Define $\rho([P])\equiv\frac{1}{2}\textrm{Tr}(P(\cdot))$. It is easy
to prove that the set
$\{[P]:P\in\bigcup^{\infty}_{m=1}\{T^{(m)}\}^{\prime}\}$ is
isomorphic to the set $\{f:[0,1]\rightarrow\mathds{N}|f\ \textrm{is\
bounded\ Borel.}\}$ and $\rho$ is the isomorphic map. By $K$-theory
for Banach algebras, we obtain
$$K_{0}(\{T\}^{\prime})\cong\{f:[0,1]\rightarrow\mathds{Z}|f\
\textrm{is\ bounded\ Borel.}\}.$$
\end{example}

\begin{lemma}
Suppose that $M_{\phi}$ is the multiplication operator on
$L^{2}(\mu_{n})$ as in Lemma 3.2. Then for every idempotent
$P\in\{M_{\phi}^{(\infty)}\}^{\prime}$, there is an invertible
operator $X$ in $\{M_{\phi}^{(\infty)}\}^{\prime}$ such that
$XPX^{-1}$ is a diagonal projection in
$\{M_{\phi}^{(\infty)}\}^{\prime}$.
\end{lemma}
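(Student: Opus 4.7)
The plan is to reduce the problem to a pointwise measurable construction of a similarity. From Lemma 3.2 we have $\phi$ one-to-one a.~e.\ on $\Lambda_n$, so $M_\phi$ is a multiplicity-one normal operator and $\{M_\phi\}^{\prime}=\{M_\psi:\psi\in L^\infty(\mu_n)\}$ is a masa. Consequently $\{M^{(\infty)}_\phi\}^{\prime}$ is identified with the von Neumann algebra of bounded weakly measurable $\mathscr{L}(\ell^2)$-valued functions on $\Lambda_n$, and a diagonal projection in $\{M^{(\infty)}_\phi\}^{\prime}$ is one of the form $\bigoplus^{\infty}_{i=1}M_{\chi_{_{S_i}}}$ for Borel $S_i\subseteq\Lambda_n$. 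In this picture the given idempotent $P$ corresponds to a bounded measurable field $\lambda\mapsto P(\lambda)$ of idempotents on $\ell^2$.

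First I would replace $P$ by an orthogonal projection. For each $\lambda$ let $Q(\lambda)$ be the orthogonal projection onto $\textrm{ran}\,P(\lambda)$; since $\{M^{(\infty)}_\phi\}^{\prime}$ is a von Neumann algebra, the left support of $P$ lies in it, so $Q\in\{M^{(\infty)}_\phi\}^{\prime}$. Set $Y(\lambda):=Q(\lambda)P(\lambda)+(I-Q(\lambda))(I-P(\lambda))$. A direct $2\times 2$ block computation in the orthogonal decomposition $\ell^2=\textrm{ran}\,P(\lambda)\oplus\textrm{ran}\,P(\lambda)^{\perp}$ shows that $Y(\lambda)$ is upper triangular with identity diagonal blocks, hence invertible with $\|Y(\lambda)\|,\|Y(\lambda)^{-1}\|\leq 1+\|P\|$, and that $Y(\lambda)P(\lambda)Y(\lambda)^{-1}=Q(\lambda)$. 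Thus $Y\in\{M^{(\infty)}_\phi\}^{\prime}$ is invertible and $YPY^{-1}=Q$, reducing the problem to the case where $P=Q$ is an orthogonal projection.

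To diagonalize $Q$, I would exploit the $\mu_n$-measurability of the rank function $r(\lambda):=\dim\textrm{ran}\,Q(\lambda)\in\{0,1,\ldots,\infty\}$ to write $\Lambda_n=\bigsqcup_k\Lambda_n^{(k)}$ as a disjoint union of Borel sets on which $Q$ has constant rank $k$. On each piece the task is to construct a measurable field of unitaries $U(\lambda)$ on $\ell^2$ with $U(\lambda)Q(\lambda)U(\lambda)^{*}=P_k$, the fixed projection onto the first $k$ standard coordinates of $\ell^2$. This follows from a measurable Gram--Schmidt procedure applied to a countable measurable family spanning $\textrm{ran}\,Q(\lambda)$ (for instance the columns of $Q(\lambda)$), extended to a measurable orthonormal basis of $\ell^2$. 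Piecing the $U(\lambda)$ together produces a unitary $U\in\{M^{(\infty)}_\phi\}^{\prime}$ with $UQU^{*}$ diagonal, and $X:=UY$ is the invertible required by the lemma.

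The principal obstacle is verifying measurability of each pointwise construction. The measurability of $\lambda\mapsto Q(\lambda)$ follows from its realization as the strong limit of $P(\lambda)(P(\lambda)^{*}P(\lambda)+\varepsilon I)^{-1}P(\lambda)^{*}$ as $\varepsilon\downarrow 0$; the measurability of $Y$ is then immediate; and the measurable selection of an orthonormal basis for the measurable field of closed subspaces $\lambda\mapsto\textrm{ran}\,Q(\lambda)$ is standard in the direct-integral literature (cf.\ \cite{Azoff_1, Azoff_2}). None of these steps is technically deep, but each must be invoked explicitly.
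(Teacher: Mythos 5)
Your argument is correct and reaches the same goal as the paper, but by a more constructive route. The paper's proof is a single appeal to the measurable cross-section machinery of Azoff--Fong--Gilfeather: it forms the Borel set of triples $(\lambda,Y,Q)$ with $Y^{-1}P(\lambda)Y=Q$ diagonal and $\|Y\|,\|Y^{-1}\|\leq k$ for ``$k$ large enough,'' and invokes their Proposition 2.1 to select a decomposable $X$. Your two-step reduction replaces that abstract selection with explicit data: the similarity $Y(\lambda)=Q(\lambda)P(\lambda)+(I-Q(\lambda))(I-P(\lambda))$, which in the decomposition $\mathrm{ran}\,P(\lambda)\oplus\mathrm{ran}\,P(\lambda)^{\perp}$ is exactly the unipotent conjugation the paper itself writes down in (6), followed by a measurable unitary diagonalization of the resulting projection field via rank stratification and measurable Gram--Schmidt. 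What your version buys is precisely the content the paper leaves implicit: the explicit formula for $Y$ shows that every idempotent of norm at most $\|P\|$ is similar to a projection via a similarity of norm at most $1+\|P\|$, which is what justifies the existence of the uniform bound $k$ in the paper's Borel set; and the measurability of each step is visible rather than delegated to a cross-section theorem. The paper's version is shorter once one accepts the selection theorem.

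One small point to tighten: on the stratum where $\mathrm{rank}\,Q(\lambda)=\infty$ you cannot always take the target to be ``the projection onto the first $k$ standard coordinates,'' since for $k=\infty$ that is the identity. You should stratify by the pair $(\dim\mathrm{ran}\,Q(\lambda),\dim\ker Q(\lambda))$ and on each piece choose a fixed diagonal projection with matching rank and corank (e.g.\ the projection onto the even coordinates when both are infinite); the measurable orthonormal bases must then be chosen for both the range and the kernel. This is routine and does not affect the validity of the argument.
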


\begin{proof}
The idempotent $P$ is decomposable on
$(\Gamma,\mu,\{\Gamma_{\infty}\})$, $\Gamma=\Lambda_{n}$. For every
$\lambda$ in $\Gamma$, the fiber space $\mathscr{H}_{\lambda}$ has
dimension $\infty$. First, we construct a bounded Borel set
$$\begin{array}{r}
\{(\lambda,Y,Q)\in\Lambda_{n}
\times\mathscr{L}(\mathscr{H}_{\lambda})
\times\mathscr{L}(\mathscr{H}_{\lambda}):Y^{-1}P(\lambda)Y=Q,\\ Q\
\textrm{is a diagonal projection,}\ ||Y||\leq k, ||Y^{-1}||\leq k\},
\end{array}$$ where $k$ is a positive integer large enough. By
(\cite{Azoff_2}, Proposition 2.1), we obtain an invertible operator
$X$ in $\{M_{\phi}^{(\infty)}\}^{\prime}$ such that $XPX^{-1}$ is a
diagonal projection. The operator $X$ is what we need.

\end{proof}

\begin{lemma}
If an operator $T\in\mathscr{L}(\mathscr{H})$ is assumed as in Lemma
3.2, then for every idempotent $Q$ in $\{T^{(\infty)}\}^{\prime}$,
there is an invertible operator $X$ in $\{T^{(\infty)}\}^{\prime}$
such that $XQX^{-1}$ is in
$\mathscr{P}\oplus\cdots\oplus\mathscr{P}\oplus\cdots(\infty\
\textrm{copies})$.
\end{lemma}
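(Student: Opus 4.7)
The idea is to mirror the proof of Lemma 3.4, replacing the Azoff-based Borel selection (which was used there to diagonalize the $(1,1)$ block of an $m$-copy idempotent on finite-dimensional fibers) with Lemma 3.9, which supplies exactly such a diagonalization in the infinite-dimensional fiber case.

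First I would observe that $\{T^{(\infty)}\}'\subseteq\{M_\phi^{(n\cdot\infty)}\}'$ by the argument in Lemma 3.2, so $Q$ is decomposable over $\Lambda_n$ and, after a unitary $U_1$ that rearranges tensor factors, $Q_1 = U_1QU_1^*$ has the $n\times n$ block upper-triangular form
$$Q_1=\left(\begin{array}{ccccc}
Q^1_{11} & Q^1_{12} & Q^1_{13} & \cdots & Q^1_{1n}\\
0 & Q^1_{11} & Q^1_{23} & \cdots & Q^1_{2n}\\
0 & 0 & Q^1_{11} & \cdots & Q^1_{3n}\\
\vdots & \vdots & \vdots & \ddots & \vdots\\
0 & 0 & 0 & \cdots & Q^1_{11}\\
\end{array}\right)_{n\times n},$$
with entries in $\{M_\phi^{(\infty)}\}'$ and $Q^1_{11}$ an idempotent (the latter because the diagonal of an upper-triangular idempotent is forced to be an idempotent). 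Lemma 3.9 then produces an invertible $X_2\in\{M_\phi^{(\infty)}\}'$ such that $X_2 Q^1_{11}X_2^{-1}$ is a diagonal projection, and conjugating $Q_1$ by $X_2^{(n)}$, which is invertible in $\{U_1T^{(\infty)}U_1^*\}'$, puts a diagonal projection in the $(1,1)$ block without disturbing the block upper-triangular shape.

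Next, in $n-1$ stages I would eliminate the upper-triangular off-diagonal blocks by exactly the row-reduction trick in (6) from the proof of Lemma 3.4: at the $i$-th stage construct an operator $X^3_i\in\{U_1T^{(\infty)}U_1^*\}'$ whose only non-identity entries are placed so as to cancel the current $(k,k+i)$ blocks, and observe that since $X^3_i-I$ is strictly upper-triangular its spectrum is $\{1\}$, so $X^3_i$ is invertible inside the subalgebra. The product $X_4=X^3_{n-1}\cdots X^3_1$ conjugates $Q_1$ (after the $X_2^{(n)}$ step) to a block-diagonal projection with each of the $n$ diagonal blocks equal to $X_2Q^1_{11}X_2^{-1}$; undoing $U_1$ reassembles these blocks into a direct sum of spectral projections $E^{(n)}(S_j)\in\mathscr{P}$, so that $X:=U_1^*X_4 X_2^{(n)}U_1$ is the invertible operator in $\{T^{(\infty)}\}'$ conjugating $Q$ into $\mathscr{P}\oplus\mathscr{P}\oplus\cdots$.

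The step I expect to be the main obstacle is the first one: guaranteeing that $X_2$ exists as a bona fide invertible element of $\{M_\phi^{(\infty)}\}'$, not merely pointwise on fibers. This is precisely where the finite-dimensional proof of Lemma 3.4 had to invoke Azoff's bounded Borel selection theorem, and here Lemma 3.9 is tailored to play exactly this role for the infinite-dimensional fibers. Once $X_2$ is in hand, the remaining steps are algebraic and proceed verbatim as in Lemma 3.4.
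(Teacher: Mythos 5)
Your proposal is correct and follows exactly the route the paper intends: the paper's own ``proof'' of this lemma is a one-line pointer to the proofs of Lemma 3.4 and Lemma 3.9, and you have spelled out precisely that combination --- the block upper-triangular decomposition and nilpotent row-reduction from Lemma 3.4, with Lemma 3.9 substituted for Azoff's finite-dimensional Borel selection to diagonalize the $(1,1)$ block on infinite-dimensional fibers. No gaps to report.
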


By the proofs of Lemma 3.4 and Lemma 3.9, we obtain this lemma.

\begin{proof}[Proof of Theorem 2.2]
By Theorem 3.3 and Corollary 3.7, we obtain $(1)\Rightarrow(2)$ and
$(1)\Rightarrow(3)$. When the multiplicity function
$m_{{{\phi_{n}}}}$ for $M_{\phi_{n}}$ takes $\infty$ on
$\sigma(M_{\phi_{n}})$, we can construct two bounded maximal abelian
sets of idempotents in the commutant of $T$ which are not similar to
each other in $\{T\}^{\prime}$. By Lemma 3.10, we know that if
$m_{{{\phi_{n}}}}$ takes $\infty$ on $\sigma(M_{\phi_{n}})$, then
the $K_{0}$ group of $\{T(\lambda)\}^{\prime}$ is $0$ a.~e.\ on
$\Gamma_{n\infty}$.
\end{proof}

\begin{proof}[Proof of Theorem 2.3]
Denote by $T_{n}$ the restriction of $T$ acting on
$(L^{2}(\mu_{n}))^{(n)}$. The operator $T$ can be expressed as
$\bigoplus\limits^{\infty}_{n=1}T_{n}$. Since the spectral measures
of $\{M_{\phi_{n}}\}^{\infty}_{n=1}$ are mutually singular, we
obtain
$$\{T\}^{\prime}=\bigoplus\limits^{\infty}_{n=1}\{T_{n}\}^{\prime}.$$
The rest of the proof is essentially an application of Theorem 2.2.
\end{proof}

\section*{Acknowledgments} The author is grateful to Professor
Chunlan Jiang and Professor Guihua Gong for their advice and
comments on writing this paper.

\bibliographystyle{amsplain}

\end{document}